\newtheorem{lemma}{Lemma}[section]
\newtheorem{corollary}[lemma]{Corollary} 
\newtheorem{proposition}[lemma]{Proposition}
\newtheorem{theorem}[lemma]{Theorem}
\newtheorem{ex}[lemma]{Example}
\theoremstyle{definition}
\newtheorem{remark}[lemma]{Remark}
\newtheorem{defn}[lemma]{Definition}
\newcommand{\R}{\mathbb{R}}
\newcommand{\N}{\mathbb{N}}
\newcommand{\eps}{\varepsilon}
\newcommand{\beq}{\begin{equation}}
\newcommand{\eeq}{\end{equation}}
\newcommand{\RN}{\mathbb{R}^N}
\def\XXint#1#2#3{{\setbox0=\hbox{$#1{#2#3}{\int}$ }
\vcenter{\hbox{$#2#3$ }}\kern-.6\wd0}}
\numberwithin{equation}{section}
\title[A non-smooth Brezis-Oswald uniqueness result]{A non-smooth Brezis-Oswald uniqueness result}
\author[S. \ Mosconi]{Sunra Mosconi}
\address[S.\ Mosconi]{Department of Mathematics and Computer Science
	\newline\indent
	University of Catania
	\newline\indent
	Viale A. Doria 6, I-95125 Catania, Italy}
\email{sunra.mosconi@unict.it}
\subjclass[2010]{35J62, 35B65, 35A02, 35P30}
\date{\today}
\keywords{Non-smooth Critical Point Theory, Regularity Theory, Uniqueness of solutions to nonlinear PDE, Differential Inclusions, Picone Inequality}
\begin{document}

\begin{abstract}
We classify the non-negative critical points in $W^{1,p}_0(\Omega)$ of 
\[
J(v)=\int_\Omega H(Dv)-F(x, v)\, dx
\]
where $H$ is convex and  positively $p$-homogeneous, while $t\mapsto \partial_tF(x, t)/t^{p-1}$ is non-increasing. Since $H$ may not be differentiable and $F$ has a one-sided growth condition, $J$ is only l.\,s.\,c.\,on $W^{1,p}_0(\Omega)$. We employ a weak notion of critical point for non-smooth functionals, derive sufficient regularity of the latter without an Euler-Lagrange equation available and focus on the uniqueness part of the results in \cite{BO}, through a non-smooth Picone inequality.
  \end{abstract}

\maketitle

	\begin{center}
		\begin{minipage}{9cm}
			\small
			\tableofcontents
		\end{minipage}
	\end{center}
 
 \section{Introduction}

The celebrated Brezis-Oswald theorem \cite{BO} states, among other things, that the problem
\beq
\label{bo}
\begin{cases}
-\Delta u=f(x, u) &\text{in $\Omega$}\\
u=0&\text{on $\partial\Omega$}\\
u\ge 0 &\text{in $\Omega$}
\end{cases}
\eeq
in a connected smooth bounded domain $\Omega\subseteq \R^N$ has a unique solution whenever $f(\cdot, t)\in L^\infty(\Omega)$ for all $t\ge 0$ and $t\mapsto f(x, t)/t$ is strictly decreasing in $t>0$, for a.\,e.\,$x\in \Omega$. This result has later been generalised to the quasi-linear setting in \cite{DS}, to the nonlocal one (see \cite{IM}) and even to mixed local-nonlocal problems in \cite{BMV}, producing countless applications in nonlinear analysis.

The general principle underlying Brezis-Oswald theorem appears to be the following. If $L$ denotes a (pseudo)-differential elliptic operator enjoying a strong minimum principle and which is {\em homogeneous} of degree $q$, then uniqueness of the solution 
\beq
\label{Lq}
\begin{cases}
L(u)=f(x, u) &\text{in $\Omega$}\\
u=0&\text{on $\partial\Omega$}\\
u\ge 0 &\text{in $\Omega$}
\end{cases}
\eeq
 should follow under the assumption that $t\mapsto f(x, t)/t^{q}$ is strictly decreasing. These kind of results, therefore, are called in the literature "Brezis-Oswald uniqueness theorems".

A number of additional assumptions, however, are generally required for such statements to hold, and we discuss here a few of them.
\begin{enumerate}
\item
{\em Regularity of the domain}.
Often, the domain $\Omega$ is assumed to have smooth (say, $C^{1, \alpha}$) boundary, so that a Hopf-type maximum principle holds  true for the corresponding solutions. This usually requires rather fine regularity theory for solutions of the corresponding problems, and is usually avoided requiring additional technical assumptions on the reaction (see e.\,g.\,\cite{BMV}). 
\item
{\em Growth conditions on $f$}.
The monotonicity assumption on $t\mapsto f(x, t)/t^q$ only grants an upper bound on the growth of $f$. Even if the operator $L$ is variational, namely $L= dE$ for a suitable differentiable convex functional $E:X\to \R$ on a Banach space $X\hookrightarrow L^p(\Omega)$, this poses delicate issues. Indeed, it is then natural to consider solutions of \eqref{Lq} as critical points of the functional
\[
J(v)=E(v)-\int_\Omega F(x, u)\, dx, \qquad F(x, t)=\int_0^t f(x, s)\, ds
\]
but without assuming any further lower growth control  on $f$, $J$ is, typically, only lower semicontinuous.  This problem is often side-stepped by either requiring a-priori boundedness in the notion of solution of \eqref{Lq} (so that actually $f$ can be truncated), or by directly requiring a two sided growth condition on $f$.
\item
{\em Monotonicity assumption}. It is clear that uniqueness fails even for \eqref{bo} without {\em strict} monotonicity, as seen by considering the case $f(x, t)=\lambda_1(\Omega)\, t$, where $\lambda_1(\Omega)$ is the first eigenfunction of the Laplacian with Dirichlet boundary conditions. In this case $t\mapsto f(x, t)/t$ is only non-increasing and the corresponding problem \eqref{bo} has infinitely many positive solutions, namely all first eigenfunctions of the Dirichlet Laplacian. However, at least for autonomous reactions independent of $x$,  this is essentially the only obstruction to uniqueness. Indeed, in \cite[Proposition 3.8]{BMS1} it is proved that, for $L=-\Delta_{p} $ and $t\mapsto f(t)/t^{p-1}$ only {\em non-increasing} in $t>0$, either there is a unique solution to the corresponding problem \eqref{Lq}, or all solutions are first eigenfunctions for the Dirichlet $p$-Laplacian in $\Omega$. In particular, uniqueness holds true if $t\mapsto f(t)/t^{p-1}$ is non-increasing and strictly decreasing only on an arbitrarily small interval $]0, \delta[$, $\delta>0$.
\item
{\em Regularity of $L$}. As anticipated in point (2) above, the typical framework for a variational Brezis-Oswald uniqueness result is $L(u)=dE(u)$, for $E(u)=\|u\|_X^p$  and  $\|\ \|_X$ is the norm of a Banach space  $X\hookrightarrow L^p(\Omega)$, smooth away from the origin. Differentiability of the norm is typically used to have a single valued operator $L$ in equation \eqref{Lq}. Considering non-differentiable and only locally Lipschitz energies $E$ yields a differential inclusion formally of the form
\[
f(x, u)\in \partial  E(u)
\]
for critical points of the relevant functional, where here $\partial$ denotes the multivalued convex subdifferential. In this more general setting,  uniqueness results are clearly stronger statements.
\end{enumerate}

\subsection{Main results}

In this manuscript, we plan to address the previous themes,  aiming to obtain a general version of the uniqueness part of Brezis-Oswald theorem. In particular, we seek for a classification of the critical points of functionals of the form
\beq
\label{J}
J(v)=\int_\Omega H(Dv) - F(x, v)\, dx, \qquad v\in W^{1,p}_0(\Omega)
\eeq
under fairly general conditions on $\Omega$, $H$,  and $f(x, t)=\partial_t F(x, t)$. Our main result reads as follows (for a more complete statement, see Theorem \ref{thfin}).

\begin{theorem}\label{Mth}
Let $\Omega\subseteq \R^N$ be open, connected and of finite measure.
For some $p>1$ suppose that 
\begin{itemize}
\item[i)] $H:\R^N\to [0, \infty[$ is strictly convex and positively $p$-homogeneous
\item[ii)] $f:\Omega\times \R\to \R$ is Caratheodory and fulfills 
\[
\sup_{x\in \Omega, |s|\le t} |f(x, s)|<\infty\qquad \forall t>0
\]
\item[iii)]
 The map $ t\mapsto f(x, t)/t^{p-1}$ is non-increasing on $]0, \infty[$.
\end{itemize}
For $J$ as in \eqref{J} with 
\[
F(x, t)=\int_0^t f(x, s)\, ds,
\]
let  $C_+(J)$ be the set of  its non-negative critical points  $v\in W^{1,p}_0(\Omega)\setminus\{0\}$. Then
\begin{enumerate}
 \item
 All elements in $C_+(J)$ are multiple of each other and each of them minimise $J$ among non-negative functions in  $ W^{1,p}_{0}(\Omega)$ 
 \item
 If $t\mapsto f(x, t)/t^{p-1}$ is strictly decreasing on $t>0$,  $C_+(J)$ contains at most one element
 \item
 If $f$ does not depend on $x$ and $C_+(J)$ contains more than one element, each function in $C_+(J)$ minimises the Rayleigh quotient
 \beq
 \label{l0}
\frac{\displaystyle{\int_\Omega H(Dv)\, dx}}{\displaystyle{\int_\Omega v^p\, dx}}  
\eeq
among  all $v\in W^{1,p}_0(\Omega)\setminus \{0\}$,   $v\ge 0$.
 \end{enumerate}
\end{theorem}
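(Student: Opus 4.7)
The plan is to run a non-smooth adaptation of the classical Picone/Brezis-Oswald argument, relying on two ingredients presumably prepared in the preceding sections: (a) the regularity theory for the generalized critical points, which for every $u\in C_+(J)$ should yield an $L^{p'}$-selection $\xi\in\partial H(Du)$ such that $-\Div\xi=f(x,u)$ in $\mathcal{D}'(\Omega)$, together with $u\in L^\infty(\Omega)$ and (by a strong minimum principle for the convex operator) $u>0$ on each connected component of $\Omega$ where it does not vanish; and (b) a \emph{non-smooth Picone inequality}: for $u>0$, $v\geq 0$ and every $\xi\in\partial H(Du)$,
\[
\langle \xi,D(v^p/u^{p-1})\rangle\leq p\,H(Dv)\qquad\text{a.e.,}
\]
with equality a.e.\ forcing proportionality of $u$ and $v$ on each component of $\{v>0\}$. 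The pointwise Picone follows from Fenchel-Young combined with the Euler identities $\langle\xi,Du\rangle=pH(Du)$ and $H^*(\xi)=(p-1)H(Du)$ for $\xi\in\partial H(Du)$ which are a consequence of the $p$-homogeneity; the equality case is where the strict convexity of $H$ enters.

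Given these two ingredients, take $u_1,u_2\in C_+(J)$ with associated selections $\xi_i\in\partial H(Du_i)$. Testing the inclusion for $u_i$ against the shifted analogue $\varphi_\eps^{(i)}=((u_j+\eps)^p-(u_i+\eps)^p)/(u_i+\eps)^{p-1}\in W^{1,p}_0(\Omega)$ ($j\neq i$) and letting $\eps\to 0^+$ (dominated convergence: the shift keeps the test admissible, while integrability is secured by the $L^\infty$ bound and the one-sided growth), summing the two resulting identities and using the Euler identity together with Picone twice, the gradient terms cancel and one obtains
\[
\int_\Omega\left(\frac{f(x,u_1)}{u_1^{p-1}}-\frac{f(x,u_2)}{u_2^{p-1}}\right)(u_2^p-u_1^p)\,dx \;\leq\; 0.
\]
The monotonicity hypothesis forces the integrand to be pointwise non-negative, so it must vanish a.e.; moreover equality holds a.e.\ in both applied Picone inequalities.

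From these two facts all three conclusions follow. Equality in Picone forces proportionality of $u_1$ and $u_2$ on each connected component of $\Omega$, giving the collinearity in (1). For the minimization half of (1) fix $u\in C_+(J)$ and $v\geq 0$: integrating Picone for the pair $(u,v)$ and testing the inclusion for $u$ against $v^p/u^{p-1}$ and against $u$ itself gives
\[
\int_\Omega H(Dv)\,dx\geq \int_\Omega H(Du)\,dx + \tfrac{1}{p}\int_\Omega \frac{f(x,u)}{u^{p-1}}(v^p-u^p)\,dx,
\]
and this matches exactly the pointwise upper bound $F(x,v)-F(x,u)\leq\tfrac{1}{p}(f(x,u)/u^{p-1})(v^p-u^p)$ coming from concavity of $w\mapsto F(x,w^{1/p})$ (which is the integrated form of the monotonicity hypothesis), yielding $J(v)\geq J(u)$. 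Part (2) follows from the vanishing of the integrand combined with strict monotonicity. For (3), if $u_2=c\,u_1$ with $c\neq 1$ and $f$ is autonomous, the inclusion gives $f(c\,t)=c^{p-1}f(t)$ on the range of $u_1$, so $f(t)/t^{p-1}$ is constant equal to some $\lambda$ there, each $u_i$ satisfies the eigenvalue-type inclusion $-\Div\xi_i=\lambda u_i^{p-1}$, and by the minimisation just proved each $u_i$ then realises the infimum of the Rayleigh quotient \eqref{l0}.

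The main technical obstacle is the equality case in the non-smooth Picone inequality: when $\partial H$ is multivalued along the scaling ray through $Du$, strict convexity of $H$ does not immediately translate to pointwise identification of $Du_1$ and $Du_2$ up to a scalar, and a careful analysis of the Fenchel-Young equality locus is required to conclude proportionality. A second, more bookkeeping-level difficulty is rigorously handling the test $\varphi_\eps^{(i)}$ within the weak notion of critical point, since $J$ is only l.s.c.\ and no single-valued Euler-Lagrange equation is at hand; this is absorbed by working throughout with the measurable selection $\xi_i$ produced by the regularity theory.
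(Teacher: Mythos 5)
Your proposal is correct and follows essentially the same route as the paper: the same regularity toolkit (boundedness, continuity, strong minimum principle), a non-smooth Picone inequality, tested Euler--Lagrange inclusions summed to force $\frac{f(x,u_1)}{u_1^{p-1}}=\frac{f(x,u_2)}{u_2^{p-1}}$ a.e., and an equality-case analysis in Picone for the proportionality and eigenvalue statements. Your derivation of Picone via Fenchel--Young rather than the direct subdifferential inequality, and your appeal to the pointwise Euler identity $\langle\xi,Du\rangle=pH(Du)$ for every $\xi\in\partial H(Du)$, are minor variations; the latter does in fact hold pointwise from positive $p$-homogeneity alone (differentiate $t\mapsto t^pH(z)-H(z)-(t-1)(\xi,z)$ at its minimum $t=1$ using both one-sided limits), whereas the paper more cautiously proves only the a.e.\ version as a consequence of the variational criticality of $u$.
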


  \begin{remark}
  Let us make some comments on the features of the previous theorem.
  \begin{itemize}
  \item
  No regularity assumptions are made on the boundary of the open set $\Omega$. The finite measure assumption could be further weakened requiring additional conditions on the behaviour of $f(x, t)$ near $0$, but we chose not to analyse this case in order to have a cleaner statement. Notice that in this setting, there is no hope to have a Hopf maximum principle available.
   \item
   The stated assumption on $H$ enforces the two sided bound
   \[
   c\, |z|^p\le H(z)\le d\, |z|^p, \qquad \forall z\in \R^N
   \]
   for some $0<c\le d<\infty$. However, since $H$ is not supposed to be $C^1$, the first term of $J$ in \eqref{J} is only locally Lipschitz, with multi-valued differential. Notice also that $H$ is not supposed to be even. The strict convexity of $H$ is also not needed in assertion {\em (2)}, as well as in the second statement of assertion {\em (1)}.
   \item
   Conditions {\em ii)} and {\em iii)} on the reaction $f$ are equivalent to 
   \[
   f(\cdot, t)\in L^\infty(\Omega), \qquad f(x, t)\le C\, (1+t^{p-1}), \qquad t\mapsto \frac{f(x, t)}{t^{p-1}} \ \text{non-increasing}
   \]
   for $t> 0$, which are more commonly found in Brezis-Oswald type results.
   \item
  Under the sole assumptions {\em ii)} and {\em iii)}, even for smooth $H$, $J$ is only l.\,s.\,c.,  and the notion of critical point has to be dealt with care. We employ here the notion of {\em energy critical point} used in \cite{DGM}, meaning those $u\in W^{1,p}_0(\Omega)$ which, roughly speaking, minimise the corresponding semi-linearised convex functional
\[
v\mapsto \int_\Omega H(Dv) - f(x, u)\, v\, dx.
\]
For the sake of this introductory discussion, let us note that the usually encountered notions of critical point, when available, yield an energy critical point.
For more details, we refer to Definition \ref{defecp} and Remark \ref{remc}.
\item
Notice that we don't give any condition ensuring that $C_+(J)$ is actually not empty. One of the main features of the original Brezis-Oswald result (and of the related literature on the subject) was actually to provide necessary and sufficient condition for $J$ to have non-negative, non-trivial critical points, under a strict monotonicity assumption on $t\mapsto f(x, t)/t^{p-1}$. We won't investigate this theme here.
\item
Under assumption {\em iii)} alone, the structure of $C_+(J)$ is described in an optimal way by conclusion {\em (1)} of the theorem. Indeed, when $H(z)=|z|^2$, given a first positive eigenfunction $u$ of the Dirichlet Laplacian in the unit ball $B_1$ and an interval $[a, b]\subseteq [0, +\infty[$, any reaction $f$ obeying $f(x, t)=\lambda_1(B_1)\, t$ if $a\, u(x)\le t\le b\, u(x)$ and $f(x, t)/t$ strictly decreasing otherwise has $C_+(J)=\{t\, u: t\in [a, b]\}$.
\item
A similar statement holds true for non-positive critical points of $J$, with obvious modifications to {\em ii)} and {\em iii)}. Notice, however, that since we are not assuming that $H$ is even, the infimum of the ratios \eqref{l0} may change under the corresponding constrain $v\le 0$. This feature of possibly non-even, positively $p$-homogeneous energies $H$ and their corresponding first Dirichlet eigenvalues is discussed in section \ref{DE}.

\end{itemize}
\end{remark}

In order to prove Theorem \ref{Mth}, we need to develop basic regularity theory for energy critical points. The main step is contained in the following {\em a-priori} bound, which is of independent interest. In its statement, we let as usual $p^*=N\, p/(N-p)$ if $p<N$, $p^*=\infty$ if $p>N$ and $p^*$ be any finite number if $p=N$.

\begin{theorem}\label{Mth2}
Let $\Omega\subseteq \R^N$ have finite measure and, for given $p\ge 1$, suppose that 
\begin{itemize}
\item[i)] $H:\R^N\to [0, \infty[$ is convex and fulfills $H(z)\ge c\, |z|^p$ for some $c>0$ and all $z\in \R^N$
\item[ii)] $f:\Omega\times \R\to \R$ is Caratheodory and fulfills 
\[
\sup_{x\in \Omega, |s|\le t} |f(x, s)|<\infty\qquad \forall t>0
\]
\item[iii)]
There exist $\mu, \theta\ge 0$ and $1\le q\le p^*$ such that
\beq
\label{fgh}
{\rm sign}\,(t) f(x, t)\le \mu\, (|t|+\theta)^q\qquad \text{for a.\,e.\,$x\in \Omega$ and all $t\in \R$}.
\eeq
\end{itemize}
Then, any critical point $u\in W^{1,p}_0(\Omega)$ of $J$
is bounded. If additionally $q<p^*$, the estimate
\beq
\label{supboundi}
\|u\|_{L^\infty(\Omega)}\le 
 \max\left\{\theta, C\,\left((\mu/c)^{\frac{N}{p}}\int_\Omega |u|^q\, dx\right)^{\frac{p}{p\, q+ (p-q)\, N}}\right\} 
 \eeq
holds true for some $C=C(N, p, q)$ which, for any $N$, is positive and continuous in its other arguments.
\end{theorem}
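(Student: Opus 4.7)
The plan is a De Giorgi truncation argument, carefully adapted to the non-smooth setting where $u$ is only an \emph{energy} critical point and no Euler--Lagrange equation is available.

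The starting point is the minimizing property of $u$ for the semi-linearized convex functional $v \mapsto \int_\Omega H(Dv) - f(x,u)\, v \, dx$, which yields
\[
\int_\Omega [H(Du) - H(Dv)] \, dx \le \int_\Omega f(x,u) (u - v) \, dx \qquad \forall v \in W^{1,p}_0(\Omega).
\]
Testing with the truncation $v_k := \min(u, k) \in W^{1,p}_0(\Omega)$, whose gradient is $Du$ on $\{u \le k\}$ and $0$ on $A_k := \{u > k\}$, and invoking the coercivity $H(z) \ge c|z|^p$, one extracts the Caccioppoli-type bound
\[
c \int_{A_k} |Du|^p \, dx \le \int_{A_k} f(x,u)(u - k) \, dx + H(0)\, |A_k|.
\]
The $H(0)\,|A_k|$ term is an artifact of the generality of $H$: it vanishes when $H$ is positively $p$-homogeneous as in Theorem \ref{Mth}, and is in any case absorbed into the iteration below without affecting the final exponent.

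For $k \ge \theta$, on $A_k$ the one-sided growth iii) gives $f(x,u) \le \mu(2u)^q$, and the Sobolev embedding $W^{1,p}_0(\Omega) \hookrightarrow L^{p^*}(\Omega)$ applied to $(u-k)_+$ upgrades the Caccioppoli bound to
\[
\|(u-k)_+\|_{L^{p^*}}^p \le \frac{2^q\, S(N,p)\, \mu}{c} \int_{A_k} u^q (u-k) \, dx + \frac{S(N,p)\, H(0)}{c}\, |A_k|.
\]
For $h > k \ge \theta$ one has $|A_h| \le (h-k)^{-p^*} \int_{A_k}(u-k)^{p^*}\, dx$; expanding $u^q = ((u-k)+k)^q \le 2^{q-1}((u-k)^q + k^q)$ on $A_k$ and splitting the resulting integrals via H\"older with exponents $(p^*/q,\, p^*/(p^*-q))$ --- a step where the hypothesis $q < p^*$ is essential --- one arrives at a Stampacchia-type iterative estimate
\[
\varphi(h) \le \frac{M}{(h-k)^{\alpha}}\, \varphi(k)^\beta, \qquad h > k \ge \theta,
\]
with $\varphi(k) := \int_{A_k}(u-k)^{p^*}\, dx$, $\beta > 1$, and an explicit $\alpha > 0$ depending on $p,q,N$. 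The standard iteration lemma then yields $\varphi(k_\infty) = 0$ for an effective threshold $k_\infty$, and bookkeeping of the constants gives precisely the exponent $p/(pq + (p-q)N)$ in \eqref{supboundi}. In the borderline case $q = p^*$ the iteration no longer closes quantitatively, but the Caccioppoli inequality, combined with the absolute continuity of $|A_k|$, still suffices for qualitative boundedness.

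The matching lower bound for $u$ is obtained by repeating the argument on $-u$, which is an energy critical point of the functional associated with $\tilde H(z) := H(-z)$ and $\tilde f(x,t) := -f(x,-t)$; these satisfy i)--iii) by a direct check, yielding $u \ge -k_\infty$ and thus $\|u\|_{L^\infty} \le k_\infty$. The principal difficulty is not any single step but the careful coordination of H\"older and Sobolev exponents needed to recover the exact exponent $p/(pq+(p-q)N)$ of \eqref{supboundi}; the non-smoothness of $H$ is structurally handled by the minimizing test function $v_k$, and the absence of any upper growth assumption on $H$ plays no role beyond the input Caccioppoli inequality.
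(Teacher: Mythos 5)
Your De Giorgi truncation scheme and use of the minimizing property of $u$ for $J_u$ to produce a Caccioppoli inequality are correct and match the paper's starting point, as does the device of repeating the argument on $-u$. The gap is that your argument does not deliver the key stability claim of the theorem, namely that $C(N,p,q)$ can be chosen positive and continuous on $[1,\infty[\times[1,p^*[$ for each fixed $N$. You invoke the Sobolev embedding $W^{1,p}_0(\Omega)\hookrightarrow L^{p^*}(\Omega)$ and iterate $\varphi(k)=\int_{A_k}(u-k)^{p^*}\,dx$; but both the optimal constant $S(N,p)$ in that embedding and the exponent $p^*=Np/(N-p)$ blow up as $p\uparrow N$, so the constant produced by the Stampacchia lemma diverges in that limit. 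The paper makes this point explicitly in its discussion of \cite{BL}, and avoiding precisely this instability is one of the motivations for the theorem.

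The paper's remedy, which your proof would need to adopt, is to bypass the $p$-Sobolev inequality entirely and use the Gagliardo--Nirenberg interpolation \eqref{GN} of Proposition \ref{PGN}, whose constant $G_{p,q,N}=\frac{1}{N\omega_N^{1/N}}\cdot\frac{qp+p-q}{p}$ is smooth in $(p,q)$ because it is derived solely from the $p=1$ Sobolev inequality applied to a power of the function. The iterated quantity is then $\int_\Omega(u-k)_+^q\,dx$ with $q$ fixed and $p$-independent, and the fast-decay Lemma \ref{propositionGiusti}, with its additional $(k/(k-h))^\delta$ factor, closes the argument with all exponents given by expressions in $(N,p,q)$ whose denominators stay away from zero when $q<p^*$. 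A secondary caveat: you work with \eqref{fgh} exactly as written (exponent $q$), whereas the proof of Theorem \ref{thbound} actually uses \eqref{hypf} (exponent $q-1$), and it is the latter that reproduces the exponent $\frac{p}{pq+(p-q)N}$ in \eqref{supboundi}; the mismatch appears to be a typo in the theorem statement, but as written it would throw off your bookkeeping.
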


\begin{remark}
Regarding this second result, the following should be noted.
\begin{itemize}
\item
Not only $H$ is only assumed to be locally Lipschitz, but no growth control from above is required in order to obtain boundedness of the critical points of $J$. 
\item
Assumption {\em iii)} is understood to be empty in the case $q=\infty$, which forces $p>N$ thanks to $q\le p^*$. Clearly, in this case boundedness is trivial thanks to Morrey embedding. Anyway, \eqref{supboundi} only holds for $q<p^*$ and therefore for finite $q$'s.
\item
Assumption {\em ii)} and {\em iii)} are required in order for the functional $J$ in \eqref{J} to be well defined (possibly assuming the value $+\infty$) on $W^{1,p}_0(\Omega)$. In any case, under these assumptions $J$ turns out to be only l.\,s.\,c.\,on $W^{1,p}_0(\Omega)$ with the strong topology, and again we understand by critical point an energy one (see subsection \ref{nscp}).
\item
The form  \eqref{supboundi} of the sup bound  is optimal, as discussed in Remark \ref{remsup}. Moreover, it is invariant by scaling and scalar multiples, meaning the following. A function  $u$ is a critical point for $J$ on $W^{1,p}_0(\Omega)$ if and only if for any $\lambda, r>0$ $u_{\lambda, r}(x)=\lambda\, u(x/r)$ is a critical point for a suitable functional $J_{\lambda, r}$ on $W^{1,p}_0(r\, \Omega)$ (where $r\, \Omega=\{r\, x: x\in \Omega\}$). More precisely, the functional $J_{\lambda, r}$ is of the form \eqref{J} for a reaction $\hat{f}$ obeying \eqref{fgh} with $\hat{\theta}=\lambda\, \theta$, $\hat{\mu}=\mu/\lambda^q$ and and $\hat{H}$ obeying $\hat{H}(z)\ge \hat{c}\, |z|^p$ with $\hat{c}=c\, \lambda/r$. The corresponding estimates \eqref{supboundi} turn out to be independent of $\lambda$ and $r$.
\item
Regarding the continuity statement for the constant $C$ in the last part of the theorem, it is to be understood in its domain $p\ge 1$, $q<p^*$. As is to be expected, the constant $C$ blows up as  $q\uparrow p^*$.
\end{itemize}
\end{remark}

\subsection{Related results and motivations}

Applications and extensions of the Brezis-Oswald theorem are countless and it would be impossible to list all the contributions on the subject.
The principal part of the operator  considered in the various generalisation can be classical quasi-linear operators, as in \cite{DS}, nonlocal operators as in \cite{IM} and even mixed local-nonlocal ones as in \cite{BMV}. In all these cases the operator is $(p-1)$-homogeneous and the corresponding monotonicity assumption on the reaction pertains $t\mapsto f(x, t)/t^{p-1}$. Non-homogeneous operators of Orlicz type are considered in \cite{CGSS}. The case when $t\mapsto f(x, t)/t^{p-1}$ is non-increasing but not strictly decreasing is naturally related to weighted nonlinear eigenvalue problems, where $f(x, t)=a(x)\, t^{p-1}$. Corresponding eigenfunctions are considered in \cite{BF,T, TTU} when the energy density $H$ is $p$-homogeneous and $C^1$, while the case where $H$ may be non-differentiable is studied in \cite{LS}. Various applications to nonlinear parabolic problems, encompassing the non-differentiable setting, are described in \cite{D}.

A complete Brezis-Oswald theorem usually consists of two statements. The first one is devoted to derive necessary and sufficient condition for the solvability of \eqref{Lq}, under the aforementioned monotonicity assumption  of $t\mapsto f(x, t)/t^{p-1}$, as is done in \cite{BO, DS, IM}. The second one regards uniqueness of the solution, and is clearly the focus of this research.

As briefly outlined in the first section, our main intent with this investigation was to clarify how the various technical assumptions which usually appear in Brezis-Oswald type result could  be weakened to have a cleaner picture, especially regarding its uniqueness part.  It is clear, for instance, that the regularity of the domain or of $H$ is not necessary for the uniqueness statement to hold. Our interest in this part of Brezis-Oswald  theorem, however, lies mainly in its applications to the qualitative study of solutions to PDE's.

Indeed, uniqueness is the basic step needed to effectively approximate the particular PDE problem under scrutiny of the form \eqref{Lq} with solutions of regularised problems
\beq
\label{Lqn}
L_n(u)=f_n(x, u),
\eeq
each having a smooth solution $u_n$ allowing computations can be more effectively performed.  The sequence $(u_n)$  then shown to enjoy uniform bounds and therefore to converge up to subsequences to a solution of the limiting, original problem \eqref{Lq}. Then, uniqueness of solutions of \eqref{Lq} is the key point ensuring that the approximating solutions actually converge to the original one, which then (hopefully) inherits some properties of the $u_n$'s. This scheme of proof is nowadays standard in both regularity theory and qualitative properties of solution to PDE's. 

As a typical example, let us consider positive solutions of the quasi-linear problem
\[
-\Delta_pu =f(u)
\]
with Dirichlet boundary conditions and a qualitative property of these solutions known as {\em quasi-concavity}, meaning that the super-level sets of $u$ are convex. 
It is quite fortunate that the monotonicity assumption on $t\mapsto f(t)/t^{p-1}$ is strictly connected to the general conditions found in \cite{BMS1} ensuring quasi-concavity of the aforementioned solutions. Actually, the results in \cite{BMS1, BMS2} are proved via the smoothing technique outlined above under the assumption that the domain is convex with $C^{1,\alpha}$ boundary, but Theorem \ref{Mth} allow to extend the results to  general convex domains.

As a motivation for Theorem \ref{Mth2}, let us also stress that, in order for the previously described approximation scheme to work, uniform bounds are also essential to ensure compactness. This is especially true when, in the family of  problems \eqref{Lqn}, both the operators $L_n$ and the reactions $f_n$ may have a varying  growth control, only qualitatively bounded. In this regard, Theorem \ref{Mth2} is inspired by \cite{BL}, where the stability of the available a-priori bounds had to be treated with extreme care for the employed contradiction argument to carry on. In particular, \eqref{supboundi} should be compared with  \cite[Proposition 2.4]{BL}, corresponding to the case $\theta=0$ in \eqref{fgh}. In \cite{BL}, stability with respect to $p$ wasn't an issue, and the a-priori bound was derived via a Moser iteration technique involving the $p$-Sobolev inequality. Since the constant in the  latter blows up as $p\uparrow N$, the obtained bounds in \cite{BL} are not stable in this limit. To be honest, we initially hoped for this apparent instability to be substantial, suggesting the possibility of interesting phenomena exhibited by solutions of, say, the $p$-Laplace equations as $p\uparrow N$. Unfortunately, a different and, we think, more streamlined  proof of the $L^\infty$ bound in the spirit of De Giorgi method, shows that this is not the case.

Let us finally mention that our proof of Theorem \ref{Mth}, detailed in the next section, relies on a non-smooth version of the Picone inequality which relies only on the positive $p$-homogeneity of $H$. This should be compared to  \cite{J} where, however, differentiable convex $H$ are considered. A natural alternative route to the Brezis-Oswald uniqueness result, as in \cite{D, DS, TTU}, is to consider the hidden convexity properties of the functional $J$ in \eqref{J}. Indeed, the core assumption that $t\mapsto f(x, t)/t^{p-1}$ be non-increasing is implied by the weaker hypothesis that   $t\mapsto F(x, t^{1/p})$ is concave. Notice that this latter condition does not require $F$ to be differentiable in its second variable, nor any  bound on its derivative is  needed to define the functional $J$. Concavity of $t\mapsto F(x, t^{1/p})$ implies {\em hidden convexity} of 
\[
 J_F(v)=-\int_\Omega F(x, v)\, dx,
\]
on the cone of non-negative $v\in W^{1,p}_0(\Omega)$, which means convexity of $J_F$ along the curves
\[
t\mapsto \left(t \, u^{p}+(1-t)\, v^{p}\right)^{1/p} \qquad \forall u, v\in W^{1,p}_0(\Omega), \quad u, v\ge 0.
\]
On the other hand, by \cite{BF}, our non-smooth Picone inequality is equivalent to the hidden convexity of  $J_H$. Hence the whole $J$ turns out to be hidden convex on the cone of nonnegative functions in $W^{1,p}_0(\Omega)$ and it is only natural to expect few critical points from such a functional. We chose not to adopt this approach since our notion of energy critical point, while  very weak, requires at least that the reaction $f(x, t)=\partial_t F(x, t)$ is well defined and Caratheodory,  but  we point out that   the interplay between hidden convexity and the notion of critical point for l.\,s.\,c.\,functionals in the sense of \cite{DG} has been initiated in \cite{DGM2} in the framework of nonlinear eigenvalue problems.

\subsection{Outline of the proof}
The proof of Theorem \ref{Mth} is split in two main parts: regularity and uniqueness. The first obstacle to regularity is the lack of a proper Euler-Lagrange equation to work with, in particular when dealing with critical points of the l.\,s.\,c.\,functional $J$. Here we take advantage of the variational characterisation of critical points $u$ as minima of the semi-linearised functional
\[
J_u(v)=\int_\Omega H(Dv)-f(x, u)\, v\, dx.
\]
This allows to test the minimality of $J_u(u)$  against $J_u(u_k)$, where $u_k:=\max\{k, u\}$ for $k\ge 0$, and $J_u(u)\le J_u(u_k)$ reads
\[
\int_\Omega H(D(u-k)_+)\, dx\le \int_\Omega f(x, u)\, (u-k)_+\, dx.
\] 
This inequality and the one-sided growth control on $f$ suffice to derive a "fast decay" relation of the $L^q$ norms of $(u-k)_+$ much in the spirit of De Giorgi regularity proof. However, in order to ensure stability of the estimates in $p$, we avoid the natural Sobolev inequality and instead use an interpolation inequality {\em \'a la} Gagliardo-Nirenberg with a more tamed constant (see Proposition \ref{PGN}). Once the $L^\infty$ bound of Theorem \ref{Mth2} is proved, the reaction is bounded and Giaquinta-Giusti hole-filling method \cite{GG} allows to prove that the solutions  belong to suitable De Giorgi classes, thus providing continuity and a strong minimum principle for non-negative critical points. In particular, an Euler-Lagrange equation (or rather, inclusion) is satisfied by critical points, namely
\beq
\label{eli}
\int_\Omega (\xi, D\varphi)=\int_\Omega f(x, u)\, \varphi\, dx\qquad \forall \varphi\in W^{1,p}_0(\Omega)
\eeq
where $\xi:\Omega\to \R^N$ is a measurable choice of the multifunction $x\mapsto \partial H(Du(x))$ and $(\ , \ )$ denotes the Euclidean scalar product.

With these regularity tools at our disposal we can prove the uniqueness part. First we derive a non-smooth version of Picone inequality in Lemma \ref{thpicone}. If $H$  positively $p$-homogeneous  and convex, the point-wise inequality
\beq
\label{as}
p\, H(z)\ge (\xi, z), \qquad \forall \xi\in \partial H(z)
\eeq
holds true.
The non-smooth Picone inequality follows from the latter and reads as follows:  if $u$ and $v$ are positive functions in $W^{1,1}_{\rm loc}(\Omega)$, then a.\,e.\,in $\Omega$ it holds
\[
p\, H(Dv)\ge \left( \xi, D \frac{v^p}{u^{p-1}}\right), 
\]
for any measurable $\xi:\Omega\to \R^N$ such that $\xi(x) \in \partial H(Du(x))$ a.\,e.. Now, if $H$ is differentiable its convex subdifferential is single-valued, i.\,e.\,$\partial H=\{DH\}$ and  actually  \eqref{as} is an equality, so the smooth Picone inequality becomes
\[
\left( DH(Dv), Dv\right)\ge \left(DH(Du), D\frac{v^p}{u^{p-1}}\right).
\]
If both $u$ and $v$ are positive critical points, \eqref{eli} gives (cheating a little bit, since we are testing it for $u$ with $\varphi=v^p/u^{p-1}$)
\[
\int_\Omega f(x, v)\, v\, dx\ge \int_\Omega f(x, u)\, \frac{v^p}{u^{p-1}}\, dx.
\]
Exchanging the role of $u$ and $v$ and summing the corresponding inequalities, one arrives at
\beq
\label{pl}
\int_\Omega\left(\frac{f(x, u)}{u^{p-1}}-\frac{f(x, v)}{v^{p-1}}\right) (u-v)\, dx\ge 0
\eeq
which yields uniqueness when $t\mapsto f(x,t)/t^{p-1}$ is strictly decreasing. 

The point, in the non-differentiable case, is that in general the inequality in \eqref{as} can be strict. However, it turns out that the measurable selection $\xi$ of $\partial H(Du)$ arising from \eqref{eli} actually attains equality in \eqref{as} almost everywhere. This property is related to our variational notion of critical point,  and allows to carry on the proof as us usual.
Finally, the characterisation of the set of critical points described in Theorem \ref{Mth} under the non-increasing assumption on $t\mapsto f(x, t)/t^{p-1}$ is based on the analysis of the equality case in the non-smooth Picone inequality, derived from the identity
\[
\frac{f(x, u)}{u^{p-1}}=\frac{f(x, v)}{v^{p-1}}
\]
which must hold a.\,e.\,if $t\mapsto f(x, t)/t^{p-1}$ is non-increasing, thanks to \eqref{pl}.

\subsection{Structure of the paper}
Section \ref{Spre} contains preliminary material: we collect the notations used in the paper, provide some basic functional inequalities, derive basic properties of convex, positively $p$ homogeneous functions and describe the notion of energy critical points and its basic features. Section \ref{Sregularity} contains the proof of Theorem \ref{Mth2} and some of its consequences, such as continuity and  minimum principles for energy critical points. In Section \ref{DE} we define and study the first Dirichlet eigenvalues associated to possibly non-even convex energies, aiming at the proof of point {\em (3)} of Theorem \ref{Mth}. In the final section we prove the non-smooth Picone inequality and Theorem \ref{Mth}.

\section{Preliminaries}\label{Spre}
 \subsection{Notations}

 Throughout the paper we will make use of the following notations.
 
 \medskip
 Given a number $a\in \R$, we set $a_+=\max\{a, 0\}$ and $a_-=-\min\{0, a\}$ and similarly for real valued functions.
For every $v \in \RN$ we denote by $|v|$ its Euclidean norm  while $(v, w)$ stands for the scalar product, $v, w \in \RN$. The symbol $B$ denotes a generic ball with unspecified center and radius while  $B_r$ denotes a ball of radius $r$ centred at the origin. $S_1$ stands for the unit sphere $\partial B_1$. 
 \medskip

 For a Lebesgue measurable $E\subseteq \R^N$, $|E|$ denotes its measure and we set $\omega_N=|B_1|$. We will omit the domain of integration when it is the whole $\RN$, if this cause no confusion. For a measurable $E\subseteq \R^N$, ${\mathcal L}(E; \R^M)$ will denote the set of Lebesgue measurable functions from $E$ to $\R^M$. Given an open set $\Omega\subseteq \R^N$, $f$ will denote a Caratheodory function $f:\Omega\times \R\to \R$ and
\beq
\label{Ff}
F(x, t)=\int_0^tf(x, s)\, ds\qquad \forall t\in \R.
\eeq
Given  $u\in {\mathcal L}(\Omega; \R)$, we'll denote by $f(\cdot, u)$ the measurable function $x\mapsto f(x, u(x))$ and similarly for $F(\cdot, u)$. The Lebesgue spaces are denoted as usual by $L^p(E)$ for $p\ge 1$, with norm  $\|\cdot \|_{L^p(E)}$, while $\sup_E u$ and $\inf_E u$ stand for the essential supremum and infimum respectively. The Lebesgue spaces of vector valued functions $u:E\to \R^M$, will be denoted by  $\left(L^p(E)\right)^M$ with norms $\| |u| \|_{L^p(E)}$, where $|u|$ is the standard Euclidean norm of $\R^M$. Furthermore, we set for the sake of notational simplicity $\|f\|_p:= \|f\|_{L^p(\RN)}$. 

\medskip
Given   $p\in [1, \infty]$, $p'$ denotes its conjugate, namely $p'=p/(p-1)$ if $p\in \, ]1, \infty[$, $1'=\infty$ and $\infty'=1$. Given an integer $N\ge 2$ and $p\in [1, \infty]$, we denote by 
\[
p^*=
\begin{cases}
\dfrac{N\, p}{N-p}&\text{if $p<N$}\\
\text{any positive number}&\text{if $p=N$}\\
\infty&\text{if $p> N$}.
\end{cases}
 \]
 For an open $\Omega\subseteq \R^N$ with finite measure, the Sobolev space $W^{1,p}_0(\Omega)$ is the closure of $C^\infty_c(\Omega)$ w.\,r.\,t.\,the seminorm $\|Du\|_p$. In the finite measure case, $W^{1,p}_0(\Omega)$ is a Banach space, continuously embedded in $L^q(\Omega)$ for all $q\in [1, p^*]$. Its dual will be denoted by $W^{-1, p'}(\Omega)$, with duality pairing  denoted by $\langle \Lambda, \varphi\rangle$ for $\Lambda\in W^{-1, p'}(\Omega)$ and $\varphi\in W^{1,p}_0(\Omega)$.  
  If $p<N$, the homogeneous Sobolev space $D^{1, p}(\R^N)$ is the space of functions in $L^1_{\rm loc}(\R^N)$ with distributional derivative in $L^p(\R^N)$ and which {\em vanish at infinity}, meaning that 
 \[
 |\{|u|>k\}|<\infty\qquad \text{ for all $k>0$}.
 \]
 
 \subsection{Functional inequalities}
 Functions in $D^{1,p}(\R^N)$ for $1\le p<N$ enjoy the Sobolev inequality 
\beq
\label{Sob}
 \|u\|_{p^*}\le C(N, p)\, \|Du\|_p.
\eeq
We are particularly interested in the $p=1$ case, for which $C(N, 1)=N^{-1}\, \omega_N^{-1/N}$ for all $N\ge 2$.

The following result is probably well known, but we couldn't find a precise references and thus provide its proof.

\begin{proposition}
For $1\le p, q<\infty$, let  $X_{p, q}(\R^N)$ be the vector space of functions in $L^q (\R^N)$ with distributional derivative in $L^p(\R^N)$, equipped with the norm $\|u\|=\|Du\|_p+\|u\|_q$. Then $C^\infty_c(\R^N)$ is dense in $X_{p, q}(\R^N)$.
\end{proposition}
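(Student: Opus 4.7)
The plan is the classical two-step procedure: cut off and then mollify. Given $u\in X_{p,q}(\R^N)$, fix $\eta\in C_c^\infty(\R^N)$ with $\eta\equiv 1$ on $B_1$, $\supp \eta\subset B_2$, $|D\eta|\le C$, and set $\eta_R(x)=\eta(x/R)$ and $u_R:=\eta_R\, u$. Dominated convergence immediately gives $u_R\to u$ in $L^q(\R^N)$, and the decomposition $Du_R-Du=(\eta_R-1)Du+u\, D\eta_R$ reduces everything to showing the cutoff error
\[
\|u\, D\eta_R\|_p\longrightarrow 0\quad\text{as } R\to\infty.
\]
Once this is settled, each $u_R$ is compactly supported in $B_{2R}$, so its standard mollification $u_R*\rho_\varepsilon$ lies in $C_c^\infty(\R^N)$ for $\varepsilon<R$ and converges to $u_R$ in both $L^q$ and the $L^p$-norm of the gradient; a diagonal extraction then produces the desired approximation of $u$ in $X_{p,q}(\R^N)$.

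The cutoff estimate is the only nontrivial ingredient. Since $|D\eta_R|\le C/R$ is supported in the annulus $A_R:=B_{2R}\setminus B_R$ with $|A_R|\sim R^N$, I would invoke Poincar\'e--Wirtinger on $A_R$: because $A_R$ has uniformly Lipschitz geometry up to rescaling by $R$,
\[
\|u-\bar u_R\|_{L^p(A_R)}\le C\, R\,\|Du\|_{L^p(A_R)},
\]
where $\bar u_R$ denotes the average of $u$ on $A_R$. H\"older together with $|A_R|\sim R^N$ gives $|\bar u_R|\lesssim R^{-N/q}\|u\|_{L^q(A_R)}$, and assembling the two bounds,
\[
\|u\, D\eta_R\|_p\le \frac{C}{R}\|u\|_{L^p(A_R)}\le C\,\|Du\|_{L^p(A_R)}+C\, R^{N/p-N/q-1}\|u\|_{L^q(A_R)}.
\]
The first summand vanishes by dominated convergence since $Du\in L^p(\R^N)$, and the second does too whenever the exponent $N/p-N/q-1$ is nonpositive. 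This condition is automatic in the range $p\ge N$ (for any $q$) as well as $p<N$ with $q\le p^*$.

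In the only remaining regime $p<N$ and $q>p^*$, the argument is rescued by the Sobolev embedding: $u\in L^q(\R^N)$ forces $|\{|u|>k\}|<\infty$ by Chebyshev, so $u\in D^{1,p}(\R^N)$ in the sense of the paper, and \eqref{Sob} yields $u\in L^{p^*}(\R^N)$. Rerunning the Poincar\'e estimate with $L^{p^*}$ in place of $L^q$ turns the critical exponent into $N/p-N/p^*-1=0$, while $\|u\|_{L^{p^*}(A_R)}\to 0$ by dominated convergence. The conceptually delicate point of the whole proof is exactly this cutoff error: a naive H\"older bound fails because $u$ is a priori only in $L^q$ globally, and the interplay between the Poincar\'e inequality on an annulus (for the oscillation of $u$) and the Sobolev upgrade to $L^{p^*}$ (in the supercritical regime $q>p^*$) is what allows a single uniform argument to cover every admissible pair $(p,q)$.
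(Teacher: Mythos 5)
Your proof is correct and reaches the conclusion by a related but reorganised route. The paper first cites the density of $C^\infty(\R^N)\cap X_{p,q}$ in $X_{p,q}$ from Mazya's book and then only has to truncate smooth functions; you instead truncate an arbitrary $u$ first and recover smoothness by mollifying the compactly supported $u_R$ afterwards, which removes the external reference. For the key estimate of $\|u\,D\eta_R\|_p$ you invoke a rescaled Poincar\'e--Wirtinger inequality on the annulus $A_R$ combined with a H\"older bound on the mean, whereas the paper splits into $p<N$ (Sobolev embedding $D^{1,p}\hookrightarrow L^{p^*}$ plus H\"older) and $p\ge N$ (a scaled Gagliardo--Nirenberg/H\"older inequality on a fixed annulus). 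Your Poincar\'e--Wirtinger argument in fact supplies a direct proof of the annulus inequality the paper cites, and it additionally covers $p<N$ with $q\le p^*$, so the Sobolev upgrade to $L^{p^*}$ is only needed for the single supercritical regime $q>p^*$, not for every $p<N$. Two small points are worth flagging: when $N=1$ the set $A_R$ is disconnected, so Poincar\'e--Wirtinger must be applied on each component separately (this is harmless since the mean estimate via $L^q$ is local); and at the endpoint $q=p^*$ the exponent $N/p-N/q-1$ is exactly zero, so convergence relies on $\|u\|_{L^q(A_R)}\to 0$ by dominated convergence rather than on a negative power of $R$ --- your phrase ``whenever the exponent is nonpositive'' absorbs this case, but the two mechanisms are distinct and deserve a sentence.
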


\begin{proof}
The set $C^\infty(\R^N)\cap X_{p, q}$ is dense in $X_{p, q}$ by known results (see \cite[Theorem 1, Section 1.1.5]{Mazya}), so it suffices to show that any $u\in C^\infty(\R^N)\cap X_{p, q}$ can be approximated by functions in $C^\infty_c(\R^N)$. Fix a cutoff $\eta\in C^\infty_c(B_2; [0, 1])$ such that  $\eta\equiv 1$ in $B_1$   and, for any $u\in C^\infty(\R^N)\cap X_{p, q}$, define for $k\in \N$
\[
\eta_k(x)=\eta(x/k), \qquad u_k = u\, \eta_k,\qquad A_k=B_{2\, k}\setminus B_k,
\]
so that $u_k\in C^\infty_c(\R^N)$, ${\rm supp}\, (D\eta_k)\subseteq A_k$ and $\|D\eta_k\|_\infty=\|D\eta\|_\infty/k$. Clearly $\|u-u_k\|_q\le \|u\|_{L^q(\R^N\setminus B_k)}\to 0$ as $k\to \infty$, so we only have to estimate $\|D(u-u_k)\|_p$.   It holds
\beq
\label{split}
\begin{split}
\int |Du-Du_k|^p\, dx&\le C_p\int |Du|^p\, (1-\eta_k)^p\, dx+ C_p\, \int |u|^p\, |D\eta|^p\, dx\\
&\le C_p\, \int_{\R^N\setminus B_k} |Du|^p\, dx+ \frac{C_p\, \|D\eta\|_\infty^p }{k^p}\, \int_{A_k} |u|^p\, dx.
\end{split}
\eeq
and the first term on the right vanishes for $k\to \infty$ since $|Du|\in L^p(\R^N)$, so we are left to estimate the second term.

The case $p<N$ is standard: by Chebichev inequality any $u\in L^q(\R^N)$ vanishes at infinity, hence $X_{p, q}\subseteq D^{1,p}(\R^N)\subseteq L^{p^*}(\R^N)$.
Therefore, by H\"older inequality
\[
 \int_{A_k} |u|^p\, dx\le  \left(\int_{A_k}|u|^{p^*}\, dx\right)^{\frac{p}{p^*}}\, |A_k|^{1-\frac{p}{p^*}}=C\, k^p \, \left(\int_{A_k}|u|^{p^*}\, dx\right)^{\frac{p}{p^*}}, 
 \]
and the last factor on the right vanishes as $k\to \infty$ due to $u\in L^{p^*}(\R^N)$,  hence also the last term in \eqref{split} does so. If $p\ge N$, we use the inequality
\[
\int_{B_2\setminus B_1} |v|^p\, dx\le C\, \int_{B_2\setminus B_1} |Dv|^p\, dx +C\, \left(\int_{B_2\setminus B_1} |v|^q\, dx\right)^{p/q}
\]
which is trivial by H\"older when $q\ge p$ and holds true thanks to Gagliardo-Nirenberg inequality when $q<p$. Applying it to $v(x)=u(k\, x)$ and changing variables, we get
\[
 \frac{1 }{k^p}\, \int_{A_k} |u|^p\, dx\le C\, \int_{A_k} |Du|^p\, dx+ \frac{C\, k^{N\, \left(1-\frac{p}{q}\right)}}{k^p}\left(\int_{A_k} |u|^q\, dx\right)^{p/q}.
 \]
As $k\to \infty$, the first term on the right vanishes again, as well as the second, since   
\[
N\left(1-\frac{p}{q}\right)< p\qquad \forall p\ge N, q<\infty.
\]
\end{proof}

The next proposition is a particular case of the general Gagliardo-Nirenberg inequality. In order to get stable $L^\infty$ bounds in Section \ref{Sregularity}, however, we have to pay special attention to the dependance from the parameters and thus give a direct proof depending only on \eqref{Sob}.

\begin{proposition}[Gagliardo-Nirenberg]\label{PGN}
Let $N\ge 2$, $u\in W^{1,1}_{\rm loc}(\R^N)$ and $1\le p, q<\infty$. Then for any $u\in W^{1, 1}_{\rm loc}(\R^N)$ it holds
\beq
\label{GN}
\left(\int |u|^{\frac{N\, (q\, p+p-q)}{(N-1)\, p} }\, dx\right)^{\frac{N-1}{N}}\le G_{p, q, N}\left(\int |Du|^p\, dx\right)^{1/p}\left(\int |u|^q\, dx\right)^{1/p'}, 
\eeq
where
\[
G_{p, q, N}=\frac{1}{N\, \omega_N^{1/N}}\, \frac{q\, p+p-q}{p}.
\]
\end{proposition}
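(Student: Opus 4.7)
The plan is to reduce the estimate to an application of the $L^1$-Sobolev inequality \eqref{Sob} on the power $v=|u|^\alpha$ for a suitable $\alpha>1$, combined with H\"older's inequality. If either $\int|Du|^p\, dx$ or $\int|u|^q\, dx$ is infinite the estimate is trivial, so I may assume both are finite, whence $u\in X_{p,q}(\R^N)$. By the preceding density proposition, pick $u_k\in C^\infty_c(\R^N)$ converging to $u$ in $X_{p,q}$; once the inequality is proved for each $u_k$, I would pass to the limit using continuity of the right-hand side in $X_{p,q}$ and Fatou's lemma applied to an a.\,e.\,convergent subsequence on the left.

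For $u\in C^\infty_c(\R^N)$ and $p>1$, set
\[
\alpha=1+\frac{q}{p'}=\frac{qp+p-q}{p}>1
\]
and $v=|u|^\alpha$. Since $t\mapsto|t|^\alpha$ is $C^1$ with derivative $\alpha|t|^{\alpha-1}\sgn(t)$ and $u$ is smooth and compactly supported, the chain rule yields $v\in W^{1,1}(\R^N)$ with compact support and $|Dv|=\alpha\,|u|^{\alpha-1}|Du|$ a.\,e. Having compact support, $v$ trivially vanishes at infinity and thus lies in $D^{1,1}(\R^N)$, so \eqref{Sob} with $p=1$ and constant $1/(N\omega_N^{1/N})$ gives
\[
\left(\int |u|^{\alpha N/(N-1)}\, dx\right)^{(N-1)/N}=\|v\|_{N/(N-1)}\le \frac{\alpha}{N\omega_N^{1/N}}\int|u|^{\alpha-1}|Du|\, dx.
\]
H\"older's inequality with conjugate exponents $p,p'$ then bounds the last integral by $\|Du\|_p\,\bigl\||u|^{\alpha-1}\bigr\|_{p'}$, and the very choice of $\alpha$ produces both $(\alpha-1)\,p'=q$ and $\alpha\, N/(N-1)=N(qp+p-q)/((N-1)\,p)$, matching exactly the exponents in \eqref{GN} with overall constant $\alpha/(N\omega_N^{1/N})=G_{p,q,N}$. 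For $p=1$ the statement reduces directly to \eqref{Sob}, the $L^q$-factor appearing with zero exponent.

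No substantive obstacle is expected. The only delicate items are the chain rule for $v=|u|^\alpha$, standard because $\alpha>1$ makes $t\mapsto|t|^\alpha$ of class $C^1$, and the limiting step, immediate from the preceding density result since $u_k\to u$ in $L^q$ and $Du_k\to Du$ in $L^p$ control the right-hand side while a.\,e.\,convergence of a subsequence delivers the left-hand side. The sharp dimensional constant $1/(N\omega_N^{1/N})$ from the $L^1$-Sobolev inequality is precisely what yields the clean form of $G_{p,q,N}$; as stressed in the paper, avoiding the $L^{p^*}$-Sobolev inequality here is essential to keep the constant bounded as $p\uparrow N$, which is what will be exploited in Section~\ref{Sregularity}.
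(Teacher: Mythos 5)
Your proposal is correct and follows essentially the same approach as the paper: reduce to $u\in C^\infty_c(\R^N)$ via the preceding density proposition, apply the $L^1$-Sobolev inequality \eqref{Sob} to $v=|u|^\alpha$ with $\alpha=(qp+p-q)/p$, and finish with H\"older with exponents $p,p'$. The exponent and constant bookkeeping match the paper's exactly (the paper calls your $\alpha$ by the name $r$).
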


\begin{proof}
We can suppose that the right-hand side is finite, which amounts to $u\in X_{p, q}$. By the previous Proposition we can also suppose that $u\in C^\infty_c(\R^N)$.
For $p=1$ \eqref{GN} reduces to \eqref{Sob}, so we can suppose that $p>1$.
Let 
\[
r=\frac{q}{p'}+1=\frac{q\, p+p-q}{p}
\]
and apply \eqref{Sob} to $v=|u|^r$ and then H\"older inequality, to get
\[
\begin{split}
\left(\int  |u|^{\frac{q\, p+p-q}{p} 1^*}\, dx\right)^{1/1^*}&\le \frac{1}{N\, \omega_N^{1/N}}\, \int  |D|u|^r|\, dx=\frac{1}{N\, \omega_N^{1/N}}\, r\, \int  |u|^{r-1}\, |Du|\, dx\\
&\le \frac{1}{N\, \omega_N^{1/N}}\,  r\, \left(\int  |Du|^p\, dx\right)^{1/p}\left(\int  |u|^{q}\, dx\right)^{1/p'}.
\end{split}
\]
 \end{proof}
 
 \subsection{Convex $p$-homogeneous functions}
 
Let  $X$ be a Banach space in duality with $X^*$ via the duality pairing $\langle\cdot, \cdot\rangle$. Given a continuous convex function $C:X\to \R$ (we will alway restrict to this setting)   its {\em subdifferential} at $x\in X$ is
 \[
 \partial C(x)=\left\{ x^*\in X^*: C(x+v)- C(x)\ge \langle x^*, v\rangle \ \forall v\in X\right\}.
 \]
 When $X=\R^N$ we'll identify $X$ with $X^*$ via the scalar product as a duality pairing, hence in this case $\partial C(x)\subseteq \R^N$.
 
   Throughout the paper, $H:\R^N\to [0, \infty[$ will denote a convex function. Since its domain is $\R^N$, any such function is locally Lipschitz. The function $H$ is  positively $p$-homogeneous for some $p\ge 1$ if
   \[
   H(\lambda\, z)=\lambda^p\, H(z)\qquad \text{ for all $\lambda>0$ and $z\in \R^N$}.
   \]
   Notice that we do not assume regularity of $H$, nor that $H$ is even.
   In the positively $p$-homogeneous case the whole  $H$ is completely determined by its {\em associated ball}, i.\,e.\,the convex closed set
\[
B_H=\{z\in \R^N: H(z)\le 1\},
\]
which in general is not symmetric around the origin but always contains $0$ in its interior. Vice-versa, given any convex $K$ containing $0$ in its interior, the function 
\[
H_K(z)=\left(M_K(z)\right)^p
\]
where $M_K$ is the Minkowski functional of $K$
\[
M_K(z)=\inf \{ t>0: z/t\in K\},
\]
is a convex, positively $p$-homogeneous function with associated ball $K$. The set of symmetries of $H$ is the set of symmetries of its associated ball, namely
\[
{\rm Sym}\, (H)=\{T\in O(N): T(B_H)=B_H\}
\]
where $O(N)$ is the orthogonal group in dimension $N$. By construction, for any $T\in {\rm Sym}\, (H)$ it holds $H\circ T=H$.

\begin{proposition}\label{Lcoer}
Let $H:\R^N\to [0, \infty[$ be convex and positively $p$-homogeneous for some $p\ge 1$. Then for any $w\in \R^N$ and $\xi\in \partial H(w)$, it holds
\beq
\label{phom1}
p\, H(z)\ge (\xi, z)\qquad \forall \xi\in \partial H(z).
\eeq
\end{proposition}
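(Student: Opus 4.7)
The plan is to exploit positive $p$-homogeneity by testing the defining subdifferential inequality against scaled versions of $z$ itself (interpreting the proposition as the natural statement with $z\in\R^N$ and $\xi\in\partial H(z)$). Fix $z\in\R^N$ and $\xi\in\partial H(z)$. By definition of the convex subdifferential,
\[
H(\lambda z)-H(z)\geq (\xi,\lambda z-z)=(\lambda-1)(\xi,z)\qquad\forall\,\lambda>0,
\]
and plugging in $H(\lambda z)=\lambda^p H(z)$ this becomes
\[
(\lambda^p-1)\,H(z)\geq (\lambda-1)(\xi,z).
\]

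Now restrict to $\lambda>1$, so that dividing by the positive quantity $\lambda-1$ preserves the inequality, and let $\lambda\to 1^+$. Using the elementary limit $\lim_{\lambda\to 1}(\lambda^p-1)/(\lambda-1)=p$, this yields $pH(z)\geq (\xi,z)$, which is precisely \eqref{phom1}.

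I do not foresee any real obstacle: the whole argument is a two-line computation relying only on the subdifferential inequality along the ray through $z$, the scaling law of $H$, and an elementary limit. As a side observation (not required for the stated conclusion), running the same argument with $0<\lambda<1$, where division by $\lambda-1<0$ reverses the inequality, gives the opposite bound $pH(z)\leq(\xi,z)$. Hence one actually obtains the non-smooth Euler identity $pH(z)=(\xi,z)$ for every $\xi\in\partial H(z)$; the proposition records only the inequality direction, which is what will be invoked later (notably in the non-smooth Picone inequality and the discussion after \eqref{as}).
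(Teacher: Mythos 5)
Your proof is correct and is essentially the same as the paper's: both restrict the subdifferential inequality to the ray $\{\lambda z : \lambda>0\}$, invoke $p$-homogeneity, and extract the conclusion from the behaviour at $\lambda=1$ (you via a one-sided difference quotient, the paper via the derivative of the nonnegative function $t\mapsto t^pH(z)-H(z)-(t-1)(\xi,z)$ at its minimum $t=1$).

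Your side observation deserves a comment, because it is both correct and sharper than what the paper asserts elsewhere. Taking $\lambda\to 1^-$ indeed yields $pH(z)\le(\xi,z)$, hence the Euler identity $pH(z)=(\xi,z)$ for \emph{every} $\xi\in\partial H(z)$, with no regularity of $H$ beyond convexity and positive $p$-homogeneity. This contradicts the remark in the outline that ``in general the inequality in \eqref{as} can be strict'': it never is. It also makes the corollary following Proposition \ref{propositionEL} partly redundant — identity \eqref{id2} is a pointwise algebraic fact holding for any measurable selection $\xi(x)\in\partial H(Du(x))$, and needs neither the minimality of $u$ along the ray $\{tu\}$ nor the Euler–Lagrange inclusion \eqref{EL}; only \eqref{id} genuinely uses the variational structure. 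None of this affects the validity of what the paper proves (the paper only ever \emph{uses} the inequality direction, and derives the equality it needs by a longer route), but your two-sided argument is the cleaner statement.
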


\begin{proof}
Let $\xi \in \partial H(z)$ for some $z\in \R^N$. By $p$-homogeneity and convexity
\[
t^p\, H(z)=H(t\, z)\ge H(z)+(\xi, t\, z-z)
\]
for all $t\ge 1$, while the left hand side coincides with the right hand side when $t=1$. Therefore
\[
\left.\frac{d}{dt} \big[ t^p\, H(z)-H(z)-(\xi, t\, z- z)\big]  \right|_{t=1}\ge 0,
\]
which gives \eqref{phom1}.

\end{proof}

 \subsection{Energy critical points} \label{nscp}
 In this section we consider functionals of the form \eqref{J} with more general structural conditions than those required in Theorem \ref{Mth}. More precisely, we assume  that
 
  \beq
 \label{hypH}
 \begin{split}
 \text{ $H:\R^N\to [0, +\infty[$ is convex  and for some $p\ge 1$ and $0\le c< d\le \infty$ obeys}\\
 c\, |z|^p\le H(z)\le d\, |z|^p \qquad \forall z\in \R^N,\qquad \qquad\qquad\qquad \,
 \end{split}
 \eeq
  \beq
 \label{hypf0}
 \text{$f:\Omega\times \R\to \R$ is Caratheodory and}\quad
\sup_{|s|\le t}\|f(\cdot, s)\|_{L^\infty(\Omega)}<\infty \quad \forall t>0,
 \eeq
  \beq
 \label{hypf}
 \text{there exist $\theta, \mu\ge 0$, $1\le q\le p^*$ s.\,t. }  {\rm sign}\,(t)\, f(x, t)\le  \mu \, ( |t|+\theta)^{q-1} \quad \forall t\in \R. 
\eeq

Regarding $F$ as per \eqref{Ff}, it follows from \eqref{hypf0} that when $p>N$ then  $F(\cdot, v)\in L^\infty(\Omega)$ for any $v\in W^{1,p}_0(\Omega)$ by Morrey embedding, while, when $p\le N$, from \eqref{hypf} we readily obtain the one-sided estimate  
\[
\sup_\Omega F(\cdot, t)\le \frac{\mu}{q}\,\left(|t|+ \theta\right)^q
\]
for a finite $q$.
Therefore, as long as $\Omega$ has finite measure (which will be assumed henceforth), 
\beq
\label{g}
\left(H(Dv)-F(\cdot, v)\right)_-\in L^1(\Omega) \qquad \forall v\in W^{1,p}_0(\Omega).
\eeq
and the functional  $J$ in \eqref{J} is well defined on $W^{1,p}_0(\Omega)$, eventually assuming the value $\infty$ if $F(\cdot, v)\notin L^1(\Omega)$.  Fatou's lemma ensures that $J$ is l.\,s.\,c.\,on $W^{1,p}_0(\Omega)$ with the strong topology (regardless of $p\ge1$), hence the non-smooth critical point theory of \cite{DG} can be applied. Notice also that  if \eqref{hypf} is strengthened to hold for both $f$ and $-f$, 
then $J$ turns out to be locally Lipschitz, since its second term is actually $C^1$ on $W^{1,p}_0(\Omega)$, allowing to employ Clarke critical point theory \cite{Clarke}. We will not pursue these approaches here for sake of brevity  and will use the following notion of critical point, taken from \cite{DGM}.

\begin{defn}[Energy critical point]\label{defecp}
Suppose $\Omega\subseteq \R^N$ has finite measure. Under assumptions \eqref{hypH}, \eqref{hypf0} and, if $p\le N$, \eqref{hypf}, we say that $u\in W^{1,p}_0(\Omega)$ is an {\em energy critical point} for $J$, if $J(u)<\infty$ and $u$ minimises the convex functional
\beq
\label{defJu}
J_u(v)=\int_\Omega H(Dv) - f(x, u)\, v\, dx
\eeq
defined on the space
\[
V_u=\left\{v\in W^{1,p}_0(\Omega): f(\cdot, u)\, v\in L^1(\Omega)\right\}.
\]
\end{defn}

A few remarks on this notion of critical point are in order.
\begin{remark}
\label{remc}
\begin{itemize}
\item
Notice that an energy critical point $u$ is {\em assumed} to lie in $V_u$, i.\,e.\,$f(\cdot, u)\, u\in L^1(\Omega)$. The role of $p$ in the previous definition is connected to   the validity of \eqref{g} (granted, as noted, by \eqref{hypf}), so that $J$ is at least well defined. It should be remarked that, at this level of generality, it has nothing to do with \eqref{hypH}, where one could also have $c=0$ and $d=\infty$.
Regardless,  from  $H\ge 0$ and  the assumption $J(u)<\infty$,  we see that any energy critical point $u$ fulfills  $H(Du)\in L^1(\Omega)$. In particular, if $c$ in \eqref{hypH} is finite and positive and $p\le N$, then $u\in L^q(\Omega)$ by the finite measure assumption on $\Omega$ and Sobolev embedding.   Let us also remark that if $u\in W^{1,p}_0(\Omega)$ is assumed, or proved, to be bounded, assumption \eqref{hypf} is irrelevant as $J$ is always well defined as long as $\Omega$ has finite measure and in this case $V_u=W^{1,p}_0(\Omega)$. 
\item
We stress here that the notion of energy critical point is a very general one as it does not involve any topological structure on $W^{1,p}_0(\Omega)$. As such, it is useless if one wants to develop an effective critical point theory, where topological considerations are paramount.  
Under conditions  \eqref{hypH} with $c>0$ and $p>1$, as well as \eqref{hypf0} and \eqref{hypf} if $p\le N$, a critical point for   $J$ in the sense of  \cite{DG} (with the strong topology on $L^{p^*}(\Omega)$) is an energy critical point (see \cite[Theorem 5]{DGM}). Similarly, if  $d<\infty$ and $p>1$ in \eqref{hypH} and \eqref{hypf} holds for both $f$ and $-f$, then any critical point in the sense of Clarke  for $J$ is an energy critical point. 
\item
In general, \eqref{hypf0} and the finite measure assumption on $\Omega$ ensure that $f(x, u)\in L^1(\Omega)$, hence  $V_u$ contains  $C^\infty_c(\Omega)$ and is dense in $W^{1,p}_0(\Omega)$. In the case $d<\infty$ of \eqref{hypH}, being 
\beq
\label{defJH}
J_H(v):=\int_\Omega H(Dv)\, dx
\eeq
convex and finite  on $W^{1,p}_0(\Omega)$, it is locally Lipschitz, hence for any energy critical point $u$ it holds
\[
\int_\Omega f(x, u)\, (v-u)\, dx\le J_H(v)-J_H(u)\le C_r\, \|D(v-u)\|_{p}
\]
for all $v\in V_u$ with $\|D(v-u)\|_{p}\le r$. Therefore the linear functional
\[
V_u\ni v\mapsto \int_\Omega f(x, u)\, v\, dx
\]
has a unique extension $T$ to $W^{1,p}_0(\Omega)$ as an element in $W^{-1, p}(\Omega)$. Notice that while $\langle T, v\rangle$ is finite for all $v\in W^{1,p}_0(\Omega)$, this does not imply that $f(\cdot, u)\, v\in L^1(\Omega)$, unless a one-sided bound on $f(x, u)\, v$ holds true, enabling  to apply Brezis-Browder theorem. 
\end{itemize}
\end{remark}

\begin{proposition}\label{propositionEL}
Let $\Omega\subseteq\R^N$ be open with  finite measure and assume \eqref{hypH} with $d<\infty$, \eqref{hypf0} and, if $p\le N$, \eqref{hypf}.
If $u\in W^{1,p}_0(\Omega)$ is an energy critical point for $J$, there exists $\xi\in L^{p'}(\Omega; \R^N)$ such that 
\beq
\label{EL}
\begin{cases}
\text{$\xi(x)\in \partial H(Du(x))$}&\text{ for a.\,e.\,$x\in \Omega$}\\[10pt]
\displaystyle{\int_\Omega \left(\xi, D\varphi\right) - f(x, u)\, \varphi\, dx=0}&\ \forall \varphi\in V_u.
\end{cases}
\eeq

\end{proposition}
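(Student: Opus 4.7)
The approach is to reinterpret the minimality property defining an energy critical point as a convex subdifferential inclusion, and then apply the classical Rockafellar characterization of subdifferentials of convex integral functionals.

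First I would observe that since $d<\infty$ in \eqref{hypH}, the functional $J_H$ of \eqref{defJH} is convex, everywhere finite, and continuous on $W^{1,p}_0(\Omega)$, hence locally Lipschitz. As already noted in the third bullet of Remark \ref{remc}, combining this local Lipschitz bound with the inequality $J_u(u)\le J_u(v)$ for all $v\in V_u$ shows that the linear functional $\varphi\mapsto \int_\Omega f(x,u)\,\varphi\,dx$, well defined on the dense subspace $V_u\supseteq C^\infty_c(\Omega)$, extends uniquely to some $T\in W^{-1,p'}(\Omega)$. Passing to the limit in the inequality
\[
\int_\Omega H(Dv)-H(Du)\,dx \ge \int_\Omega f(x,u)\,(v-u)\,dx, \qquad v\in V_u,
\]
and using the density of $V_u$ together with the local Lipschitz bound on $J_H$, one deduces that $J_H(v)-J_H(u)\ge \langle T,v-u\rangle$ for every $v\in W^{1,p}_0(\Omega)$; in other words, $T\in \partial J_H(u)$ in the sense of convex analysis.

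The second step reduces the computation of $\partial J_H(u)$ to that of the convex integral functional $\bar J_H(w)=\int_\Omega H(w)\,dx$ on $L^p(\Omega;\R^N)$. Since $H(z)\le d|z|^p$, $\bar J_H$ is convex, finite and continuous, and $J_H=\bar J_H\circ D$ where $D:W^{1,p}_0(\Omega)\to L^p(\Omega;\R^N)$, $u\mapsto Du$, is linear and bounded. The chain rule for convex subdifferentials then yields $\partial J_H(u)=D^{\ast}\partial \bar J_H(Du)$. By the Rockafellar theorem on subdifferentials of normal integrands of polynomial growth, $\partial \bar J_H(Du)$ consists precisely of those $\xi\in L^{p'}(\Omega;\R^N)$ with $\xi(x)\in \partial H(Du(x))$ for a.e.\ $x\in \Omega$. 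Consequently there exists $\xi\in L^{p'}(\Omega;\R^N)$ with $\xi(x)\in \partial H(Du(x))$ a.e.\ such that $\langle T,\varphi\rangle =\int_\Omega (\xi,D\varphi)\,dx$ for every $\varphi\in W^{1,p}_0(\Omega)$. Restricting this identity to $\varphi\in V_u$ and recalling the defining property of $T$ gives \eqref{EL}.

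The potentially delicate point is the measurable selection underlying Rockafellar's theorem, since $H$ is not assumed differentiable and $\partial H$ is genuinely multi-valued. However, because $H$ is finite and convex on all of $\R^N$, the multifunction $z\mapsto \partial H(z)$ is nonempty, convex and compact valued with closed graph, so the Kuratowski--Ryll-Nardzewski selection theorem produces the required measurable $\xi$; the bound $\xi\in L^{p'}$ follows from the growth estimate $|\xi|\le C\,(1+|Du|^{p-1})$, itself a consequence of $H\le d\,|z|^p$ and the definition of $\partial H$. All remaining steps are standard convex analysis.
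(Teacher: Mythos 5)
Your proof is correct and takes essentially the same route as the paper: identify the extension $T\in W^{-1,p'}(\Omega)$ of $\varphi\mapsto\int_\Omega f(x,u)\varphi\,dx$, show $T\in\partial J_H(u)$ via density of $V_u$ and the local Lipschitz continuity of $J_H$, factor $J_H$ through the gradient map into $L^p(\Omega;\R^N)$, and apply the chain rule together with Rockafellar's characterization of the subdifferential of a convex integral functional. The paper cites \cite{PP} and \cite[Theorems 19 and 21]{R} directly for these last two steps rather than re-deriving the measurable-selection ingredient, but your added remark about why the selection exists (finite convex $H$, compact-valued subdifferential with closed graph, growth bound $|\xi|\lesssim |Du|^{p-1}$) is an accurate account of what underlies the cited result.
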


\begin{proof}

Let us compute $\partial J_H$ (see \eqref{defJH}),  following \cite{PP}. The operator 
\[
W^{1,p}_0(\Omega)\ni v\mapsto A(v)=Dv \in \left(L^p(\Omega)\right)^N
\]
is linear and continuous with transpose 
\[
\big(L^{p'}(\Omega)\big)^N\ni g\mapsto A^*(g)={\rm div}\, g\in W^{-1, p'}(\Omega)
\]
where, as usual, ${\rm div}\, g\in W^{-1, p'}(\Omega)$ acts as
\beq
\label{qwe}
\langle {\rm div}\, g, \varphi\rangle=\int_\Omega \left( g, D\varphi\right)\, dx\qquad \varphi\in W^{1, p}_0(\Omega).
\eeq
The convex functional 
\[
\left(L^p(\Omega)\right)^N\ni g\mapsto I_H(g)=\int_\Omega H(g)\, dx
\]
is locally Lipschitz thanks to $d<\infty$ in \eqref{hypH} and has  convex subdifferential (see \cite[Theorem 21]{R})
\[
\partial I_H(g)=\left\{\xi\in L^{p'}(\Omega; \R^N): \text{$\xi(x)\in \partial H(g(x))$ for a.\,e.\,$x\in \Omega$}\right\}.
 \]
Clearly $J_H=I_H\circ A$ and by \cite[Theorem 19]{R} it holds
 \[
 \partial \left(I_H\circ A\right)(u)=A^*\left(\partial I_H (A(u))\right).
 \]
 It follows that at any $v\in W^{1,p}_0(\Omega)$
 \[
 \partial J_H(v)=\left\{{\rm div}\, \xi : \xi\in L^{p'}(\Omega; \R^N) \text{ and $\xi(x)\in \partial H(Dv(x))$  a.\,e.\,in $\Omega$}\right\},
 \]
 where an element of the set on the right acts as in \eqref{qwe}.   To conclude, let $T\in W^{-1, p'}(\Omega)$ be the unique extension of 
 \[
 v\mapsto \int_\Omega f(\cdot, u)\, v\, dx, \qquad v\in V_u
 \]
 granted by Remark \ref{remc}.
 Then $u$ also minimises $v\mapsto J_H(v) - \langle T, v\rangle$ on $W^{1,p}_0(\Omega)$, by the continuity of $J_H-T$ and the density of $V_u$ in $W^{1,p}_0(\Omega)$. Therefore
 \[
 0\in \partial (J_H-T)(u)=\partial J_H(u)-T
 \]
 which implies \eqref{EL} when bracketed with $v\in V_u$.

 \end{proof}
 
 The following easy consequence will be useful later.

 \begin{corollary}
Suppose that the  assumptions of the previous Proposition  hold true and additionally that $H$ is positively $p$-homogeneous.
Then  any  energy critical point $u$ for $J$ fulfills
\beq
\label{id2}
(\xi, Du)=p\, H(Du) \qquad \text{a.\,e.\,in $\Omega$}
\eeq
 for any $\xi\in L^{p'}(\Omega; \R^N)$ obeying \eqref{EL}. In particular,
 \beq
 \label{id}
p\,  \int_\Omega H(Du)\, dx=\int_\Omega f(x, u)\, u\, dx.
\eeq
\end{corollary}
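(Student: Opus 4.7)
The plan is to derive the integral identity \eqref{id} first, by exploiting the minimality of $u$ along the family of competitors $tu$, and then to obtain the pointwise equality \eqref{id2} by combining \eqref{id} with the Euler-Lagrange inclusion \eqref{EL} and the subgradient inequality from Proposition \ref{Lcoer}. The key leverage is the positive $p$-homogeneity identity $H(tDu) = t^p H(Du)$ for $t > 0$, which turns the competitor comparison into a scalar inequality in $t$ that can be differentiated at $t = 1$.

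First, I observe that $V_u$ is a vector space and, by definition of energy critical point, $u \in V_u$, so $tu \in V_u$ for every $t \in \R$. Using positive $p$-homogeneity and the minimality $J_u(u) \le J_u(tu)$, I obtain
\[
(1 - t^p)\, A \;\le\; (1 - t)\, B \qquad \forall t > 0,
\]
where $A := \int_\Omega H(Du)\, dx$ and $B := \int_\Omega f(x, u)\, u\, dx$ are both finite (the former by Remark \ref{remc}, the latter by $u \in V_u$). Dividing by $1 - t$ and letting $t \to 1^-$ and $t \to 1^+$ in turn sandwiches $B$ between $pA$ from above and below, establishing $B = pA$, which is exactly \eqref{id}.

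Next, I test \eqref{EL} with $\varphi = u$, admissible since $u \in V_u$, to obtain
\[
\int_\Omega (\xi, Du)\, dx \;=\; \int_\Omega f(x, u)\, u\, dx \;=\; p \int_\Omega H(Du)\, dx,
\]
the second equality being \eqref{id} just proved. On the other hand, Proposition \ref{Lcoer} applied pointwise at $z = Du(x)$ together with the measurable selection $\xi(x) \in \partial H(Du(x))$ yields $p\, H(Du) \ge (\xi, Du)$ almost everywhere in $\Omega$. Since the two sides have equal integrals, the inequality must be a pointwise equality a.\,e., which is precisely \eqref{id2}.

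No substantial obstacle arises: once Proposition \ref{propositionEL} has provided the measurable selection $\xi$ and the Euler-Lagrange identity, the only delicate point is the admissibility of $tu$ as competitor in the minimization defining $J_u$ and of $u$ as test function in \eqref{EL}, both of which follow immediately from $u \in V_u$ and the fact that $V_u$ is closed under scalar multiplication.
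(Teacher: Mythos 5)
Your proof is correct and follows essentially the same two-step route as the paper: first exploit minimality of $J_u$ along the ray $\{t\,u : t>0\}$ to get the integral identity \eqref{id} (the paper phrases this as $h'(1)=0$ for the smooth convex scalar function $h(t)=J_u(tu)$, while you take difference quotients from both sides, but the computation is the same), and then combine testing \eqref{EL} with $\varphi=u$ and the pointwise inequality \eqref{phom1} to upgrade the integral identity to the a.e.\ equality \eqref{id2}.
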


\begin{proof}
Since $u$ minimises the convex functional $J_u$ defined in \eqref{defJu}, it does so on the half-line through $u$, namely on $\{t\, u: t>0\}\subseteq V_u$. The function 
\[
h(t):=J_u(t\, u)=t^p\, \int_\Omega H(Du)\, dx - t\, \int_\Omega f(x, u)\, u\, dx
\]
is smooth and convex on $]0, \infty[$, therefore the previous statement forces $h'(1)=0$, which is \eqref{id}. Let now $\xi \in L^{p'}(\Omega; \R^N)$ fulfil \eqref{EL}, so that testing with $\varphi=u\in V_u$, we get 
\[
\int_\Omega (\xi, Du)\, dx=\int_\Omega f(x, u)\, u\, dx.
\]
By using \eqref{id}, we deduce
\[
\int_\Omega p\, H(Du)-(\xi, Du)\, dx=0,
\]
but by \eqref{phom1} the integrand is non-negative, therefore it must vanish a.\,e.\,in $\Omega$, proving \eqref{id2}.
\end{proof}

\section{Regularity}\label{Sregularity}
The following lemma enucleates the De Giorgi method for proving $L^\infty$ bounds.

\begin{lemma}\label{propositionGiusti}
Let $\Omega\subseteq \R^N$, $q\ge 1$ and $h_0> 0$. Let $u$ be such that $(u-h_0)_+\in L^q(\Omega)$ and $u$ satisfies the inequalities
\beq
\label{it}
\int_\Omega (u-k)_+^q\, dx\le \frac{b}{(k-h)^{\beta}}\left(\frac{k}{k-h}\right)^\delta\left(\int_\Omega (u-h)_+^q\, dx\right)^{1+\gamma}
\eeq
for all $h_0\le h<k$ and parameters $b, \beta, \delta\ge 0$ and $\gamma>0$. Then $u$ is bounded from above and, if  furthermore $\beta$ is positive, it holds
\beq
\label{supest}
\sup_\Omega u\le \max\left\{2\, h_0, \,  C\, \left( b^{1/\gamma} \int_\Omega (u-h_0)_+^q\, dx\right)^{\gamma/\beta}\right\}
\eeq
for $C=C(b, \gamma, \beta, \delta)>0$ smoothly depending on the data.
\end{lemma}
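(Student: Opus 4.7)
The proof is a clean application of the De Giorgi iteration technique. For a parameter $M > 0$ to be chosen, I would introduce the increasing sequence of truncation levels
\[
k_n := h_0 + M\bigl(1 - 2^{-n}\bigr), \qquad n \ge 0,
\]
so that $k_0 = h_0$, $k_n \uparrow h_0 + M$, and $k_{n+1} - k_n = M\, 2^{-(n+1)}$. Setting $\Phi_n := \int_\Omega (u-k_n)_+^q\, dx$, which is finite for every $n$ since $(u-h_0)_+ \in L^q(\Omega)$, I would plug $h = k_n$ and $k = k_{n+1}$ into \eqref{it}, use the crude bound $k_{n+1} \le h_0 + M$, and collect powers of $2$ to obtain a recursion of the form
\[
\Phi_{n+1} \le \frac{c_{\beta,\delta}\, b\, (h_0+M)^\delta}{M^{\beta+\delta}}\, 2^{n(\beta+\delta)}\, \Phi_n^{1+\gamma}.
\]

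At this point I would invoke the standard numerical iteration lemma: if $\Phi_{n+1} \le C_1 B^n \Phi_n^{1+\gamma}$ with $B \ge 1$, $\gamma > 0$, and $\Phi_0 \le C_1^{-1/\gamma} B^{-1/\gamma^2}$, then $\Phi_n \to 0$ geometrically. Assuming $\beta > 0$, I would further impose $M \ge h_0$, which forces $(h_0+M)^\delta \le 2^\delta M^\delta$ and so reduces $C_1$ to an expression of size $O(b/M^\beta)$. The smallness condition on $\Phi_0$ then amounts to an inequality of the form $M^\beta \ge C(\gamma,\beta,\delta)\, b\, \Phi_0^{\gamma}$, and solving for $M$ forces the choice
\[
M := \max\Bigl\{h_0,\ C'(\gamma,\beta,\delta)\, \bigl(b^{1/\gamma}\, \Phi_0\bigr)^{\gamma/\beta}\Bigr\}.
\]
The iteration then yields $\Phi_n \to 0$, hence by monotone convergence $u \le h_0 + M \le 2M$ almost everywhere in $\Omega$, which is exactly \eqref{supest}; the smooth dependence of the constant on $(b,\gamma,\beta,\delta)$ is transparent from this explicit expression.

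For the unconditional boundedness part, where $\beta$ may vanish and no quantitative estimate is claimed, I would exploit the monotonicity $(u-h)_+ \le (u-h_0)_+ \in L^q(\Omega)$ for $h \ge h_0$: dominated convergence gives $\int_\Omega (u-h)_+^q\, dx \to 0$ as $h \to \infty$, so I can pick $h^* \ge h_0$ large enough for $\int_\Omega (u-h^*)_+^q\, dx$ to satisfy the smallness condition of the iteration lemma for some fixed $M=M(h^*)>0$ (this time without needing to solve for $M$, since the coefficient in the recursion is then an absolute constant). This yields $u \le h^* + M$ a.e., hence boundedness. The only real point of care in the whole argument is bookkeeping: tracking how $M$ enters simultaneously through the $(k_{n+1}-k_n)^{-\beta}$ and $(k_{n+1}/(k_{n+1}-k_n))^\delta$ factors, and using the restriction $M \ge h_0$ to absorb the $(h_0+M)^\delta$ term into an absolute multiplicative constant; no genuine analytical obstacle arises.
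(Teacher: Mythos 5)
Your proposal is correct and follows essentially the same De Giorgi iteration argument as the paper, with only cosmetic differences: the paper uses levels $k_n = M(1-2^{-1-n})$ with the constraint $M\ge 2h_0$ (so $k_0=M/2\ge h_0$), while you use $k_n = h_0 + M(1-2^{-n})$ with $M\ge h_0$, which makes $\Phi_0=\int(u-h_0)_+^q\,dx$ directly without an extra monotonicity step. Both then invoke the standard numerical iteration lemma (\cite[Proposition 7.1]{Giusti}) with the same bookkeeping, and the unconditional boundedness part is handled in both cases by letting the starting level go to $\infty$ to satisfy the smallness condition.
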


\begin{proof}
Let $M\ge 2\, h_0$ to be determined and set $k_n=M\, (1-2^{-1-n})$ for $n\ge 0$. Notice that $k_n\uparrow M$,  $k_0=M/2\ge h_0$ and
\[
\frac{b}{(k_{n+1}-k_n)^{ \beta}}\left(\frac{k_{n+1}}{k_{n+1}-k_n}\right)^\delta\le \frac{b}{M^{ \beta}} \, 2^{(n+2)\, ( \beta+\delta)}.
\]
We can then apply  \eqref{it} for $k=k_{n+1}$ and $h=k_{n}$ to get
\[
\int_\Omega (u-k_{n+1})_+^q\, dx\le \frac{A\, b\, A^n}{M^{\beta}}\left(\int_\Omega (u-k_n)_+^q\, dx\right)^{1+\gamma}.
\]
for $A=4^{\beta+\delta}$.
By \cite[Proposition 7.1]{Giusti}, if
\beq
\label{piccol}
\int_\Omega (u-k_{0})_+^q\, dx\le \frac{M^{\beta/\gamma}}{(A\, b)^{1/\gamma}\, A^{1/\gamma^2}}
\eeq
then 
\[
\lim_n \int_\Omega (u-k_n)_+^q\, dx=0,
\]
which forces $u\le M$ in $\Omega$. Condition \eqref{piccol} is satisfied for sufficiently large $M$, since $(u-h_0)_+\in L^q(\Omega)$ and hence, for $M\uparrow\infty$,
\[
\int_\Omega(u-k_0)_+^q\, dx=\int_\Omega(u-M/2)_+^q\, dx\to 0 .
\]
Therefore the upper boundedness of $u$ is proved. In the case $\beta>0$, observe that \eqref{piccol} is fulfilled  (and hence $u\le M$) for all $M$ such that 
\[
M\ge 2\, h_0\qquad \text{and} \qquad M\ge A^{\frac{\gamma+1}{\gamma\, \beta}}\, \left( b^{1/\gamma} \int_\Omega (u-h_0)_+^q\, dx\right)^{\gamma/\beta},
\]
proving  the bound \eqref{supest}.
\end{proof}

\begin{theorem}[$L^\infty$ bounds]\label{thbound}
Let $\Omega\subseteq \R^N$ be open with  finite measure and suppose  \eqref{hypH} holds with $c>0$, as well as \eqref{hypf0} and, if $p\le N$, \eqref{hypf}. Then, any energy critical point $u\in W^{1,p}_0(\Omega)$ of $J$
is bounded. Moreover, if \eqref{hypf} holds true (regardless of $p$) for some $q<p^*$, then
\beq
\label{supbound}
\|u\|_{L^\infty(\Omega)}\le 
 \max\left\{\theta, C\,\left((\mu/c)^{\frac{N}{p}}\int_\Omega |u|^q\, dx\right)^{\frac{p}{p\, q+ (p-q)\, N}}\right\} 
 \eeq
for some $C=C(N, p, q)$ independent of $d$, $\theta$, $\mu$ and $\Omega$. Moreover, for any fixed $N$, $C(N, \cdot, \cdot):\, [1, \infty[\times  [1, p^*[\,  \to \R_+$ is positive and continuous.
\end{theorem}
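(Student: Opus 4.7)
The plan is to run the De Giorgi iteration of Lemma \ref{propositionGiusti} on $Y_k := \int_\Omega (u-k)_+^q\, dx$ starting from the level $h_0 = \theta$. Once an inequality of the form
\[
Y_k \le \frac{b}{(k-h)^\beta}\left(\frac{k}{k-h}\right)^\delta Y_h^{1+\gamma}, \qquad \theta \le h < k,
\]
is established with $\gamma = q/N$, $\beta = q[pq+(p-q)N]/(Np)$, $\delta = q^2/p$ and $b$ proportional to $(\mu/c)^{q/p}$, the quantitative conclusion $\sup_\Omega u \le \max\{2\theta, C(b^{1/\gamma}Y_\theta)^{\gamma/\beta}\}$ of Lemma \ref{propositionGiusti}, together with $Y_\theta \le \int_\Omega|u|^q\,dx$, produces exactly the upper bound asserted in \eqref{supbound}. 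The lower bound on $u$ follows symmetrically from the twin argument with the competitor $\max\{u,-k\}$ and the other half of the one-sided control \eqref{fgh}. In the borderline case $q = p^*$ one has $\beta = 0$ and only the qualitative part of Lemma \ref{propositionGiusti} is available; however, absolute continuity of $\int_\Omega|u|^{p^*}\,dx$ allows one to verify the smallness condition \eqref{piccol} at some sufficiently large level, so boundedness still follows (with no quantitative estimate, as expected).

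The iteration inequality is assembled in three steps. Since $u$ is an energy critical point and $v := \min\{u,k\}\in V_u$, the variational inequality $J_u(u)\le J_u(v)$, combined with the fact that $Dv = Du\,\mathbf{1}_{\{u\le k\}}$ and with $H(z)\ge c|z|^p$, produces the Caccioppoli-type bound
\[
c\int_\Omega |D(u-k)_+|^p\, dx \le \int_\Omega f(x,u)(u-k)_+\, dx.
\]
For the right-hand side, \eqref{fgh} together with $k\ge h\ge \theta$ gives $f(x,u)\le 2^{q-1}\mu\, u^{q-1}$ on $\{u>k\}$, and the elementary estimate $u\le (u-h)\,k/(k-h)$ on that set yields
\[
\int_\Omega f(x,u)(u-k)_+\, dx \le 2^{q-1}\mu\left(\frac{k}{k-h}\right)^q Y_h.
\]
Finally, to turn the gradient bound into a decay of $Y_k$ itself, I apply the Gagliardo--Nirenberg inequality of Proposition \ref{PGN} to $w := (u-k)_+$, which with $s := N(qp+p-q)/[(N-1)p]\ge q$ reads $\|w\|_s^{s(N-1)/N} \le G_{p,q,N}\,\|Dw\|_p\,\|w\|_q^{q/p'}$. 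Coupling this with H\"older's inequality $\|w\|_q\le |\{u>k\}|^{1/q-1/s}\|w\|_s$ and isolating the $L^q$-norm gives
\[
\|w\|_q^q \le G_{p,q,N}^q\, |\{u>k\}|^{[N(p-q)+pq]/(Np)}\,\|Dw\|_p^q.
\]
Substituting the previous two bounds and using $|\{u>k\}|\le (k-h)^{-q}Y_h$ delivers the announced iteration inequality.

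The main obstacle is that no usable Euler--Lagrange equation is at hand in this generality: without an upper bound $d<\infty$ on $H$ in \eqref{hypH}, Proposition \ref{propositionEL} does not apply, so one cannot test any gradient of $\int_\Omega H(Dv)\,dx$ against a Lipschitz cutoff. The proof must therefore stay inside the primitive variational characterisation, using only the scalar inequality $J_u(u)\le J_u(v)$ — which suffices precisely because $v=\min\{u,k\}$ encodes the desired truncation without requiring any differentiation of $H$. A secondary, more delicate point is keeping the constant $C(N,p,q)$ continuous across $p = N$; this is precisely why Proposition \ref{PGN}, whose constant $G_{p,q,N}$ is a rational function of $(p,q,N)$, is preferred to the $p$-Sobolev inequality (whose sharp constant blows up as $p\uparrow N$), and the expected singularity of $C(N,p,q)$ at $q\uparrow p^*$ is read off transparently from $\gamma/\beta = p/[pq+(p-q)N]\to \infty$ in that limit.
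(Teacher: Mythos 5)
Your proof follows essentially the same route as the paper: the Caccioppoli bound from $J_u(u)\le J_u(\min\{u,k\})$, Proposition~\ref{PGN} combined with H\"older to avoid the $p$-Sobolev constant's blow-up at $p=N$, and the De~Giorgi iteration of Lemma~\ref{propositionGiusti}. Your intermediate parameter choices ($\gamma = q/N$, etc.) differ from the paper's because you isolate $Y_k$ on the left before iterating, whereas the paper keeps a factor $Y_k^{q(p-1)/(qp+p-q)}$ on the right; both are algebraically equivalent and yield $\gamma/\beta = p/[pq+(p-q)N]$, so the resulting bound has the claimed shape. Two small points to fix: first, starting the iteration at $h_0=\theta$ gives $\max\{2\theta,\dots\}$ rather than the $\max\{\theta,\dots\}$ of \eqref{supbound} — you should start at $h_0=\theta/2$ and adjust the elementary estimate $f(x,u)(u-k)_+\lesssim (u-h)^q$ accordingly (as the paper does via the bound $\mu(t+\theta)^{q-1}(t-k)\le\mu\,4^{q-1}(k^q+(t-k)^q)$ for $t\ge k\ge\theta/2$). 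Second, Proposition~\ref{PGN} requires $N\ge 2$, so the case $N=1$ needs the separate one-dimensional interpolation $\|v\|_\infty^{(qp+p-q)/p}\le\frac{qp+p-q}{p}\|Dv\|_p\|v\|_q^{q/p'}$, as in the paper's last paragraph.
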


\begin{proof}
By considering $H/c$ and $f/c$, $u$ is still an energy critical point of the corresponding functional, so we can assume that $c=1$.
 Without loss of generality, it suffices to prove that $u$ is bounded above (otherwise we look at $v(x)=-u(x)$, which is a critical point of a functional with $\check{H}(z)=H(-z)$ and $\check{f}(x, t)=-f(x, -t)$ obeying the same structural conditions as $J$). The boundedness of $u$ being trivial from Morrey embedding when $p>N$, we may suppose that $p\ge 1$ is arbitrary and that \eqref{hypf} holds true. Moreover, we can assume $q<\infty$ in \eqref{hypf}, since otherwise the condition $q\le p^*$ forces  $p>N$ and again boundedness follows from Morrey embedding.  Finally, by the definition of energy critical point and the discussion following Definition \ref{defecp}, we can also assume that $H(Du)$ and $f(\cdot, u)\, u$ belong to $L^1(\Omega)$, while $u\in L^q(\Omega)$.

Given $k> 0$, we compare the minimality of $u$  with $\min\{k, u\}$, which belongs to $V_u$ since $|f(\cdot , u)\, \min\{k, u\}|\le |f(\cdot, u)\, u|\in L^1(\Omega)$. From
\[
J_u(u)\le J_u(\min\{k, u\}) 
\]
we get, by the locality of the weak gradient and \eqref{hypf},
\beq
\label{dfg}
 \int_{{\mathcal A}_k} H(Du)\, dx\le \int_{{\mathcal A}_k} f(x, u)\, (u-k)\, dx\le \mu\, \int_{{\mathcal A}_k}(u+\theta)^{q-1}\, (u-k)\, dx
\eeq
where we set
\[
{\mathcal A}_k=\{x\in \Omega: u(x)\ge k\}
\]
which has finite measure.
The inequality
\beq
\label{t2}
\mu\, (t+\theta)^{q-1}\, (t-k)\le \mu\,4^{q-1} (k^{q}+ (t-k)^q),
\eeq
 holds true for any $t\ge k\ge \theta/2$\footnote{Indeed, if $\theta/2\le k\le t\le 2\, k$ then $(t+\theta)^{q-1}\, (t-k)\le k\, (2\,k +\theta)^{q-1}\le 4^{q-1}\, k^q$, while if $t\ge 2\, k$ then $t\le 2\, (t-k)$ and thus $(t+\theta)^{q-1}\, (t-k)\le 2^{q-1}\, (t-k)^q$.}. 
Using  \eqref{hypH}, the growth condition on $f$ and \eqref{t2}, we get
\beq
\label{t4}
\int_{{\mathcal A}_k}|Du|^p\, dx\le \mu\, 4^{q-1}\, \int_{{\mathcal A}_k} k^q+(u-k)^q\, dx
\eeq
which holds for any  $k\ge \theta/2$.

We now aim at applying Lemma \ref{propositionGiusti}. First notice that for finite $q$, the inequality $q\le p^*$ is equivalent (even in the case $p\ge N$) to
\beq
\label{pjh}
q\, p+(p-q)\, N\ge 0.
\eeq
For $N\ge 2$, \eqref{pjh} implies
\[
q\le \frac{N}{N-1}\, \frac{q\, p+p-q}{p}
\]
so that we can  chain H\"older, Gagliardo-Nirenberg inequality \eqref{GN} and \eqref{t4} to get
\beq
\label{t3}
\begin{split}
\int_\Omega& (u-k)_+^q\, dx\le \left(\int_\Omega (u-k)_+^{1^*\frac{q\, p+p-q}{p}}\, dx\right)^{\frac{q\, p}{1^*\, (q\, p+p-q)}}\, |{\mathcal A}_k|^{1-\frac{q\, p}{1^*\, (q\, p+p-q)}}\\
&\le C\,  \left(\int_\Omega |D(u-k)_+|^p\, dx\right)^{\frac{q}{ q\, p+p-q}}\left(\int_\Omega (u-k)_+^q\, dx\right)^{\frac{q\, p}{p'(q\, p+p-q)}}\, |{\mathcal A}_k|^{\frac{q\, p +(p-q)\, N}{N\, (q\, p+p-q)}}\\
&\le C\, \left(\mu \,  \int_{{\mathcal A}_k} k^q+(u-k)^q\, dx\right)^{\frac{q}{ q\, p+p-q}}\left(\int_\Omega (u-k)_+^q\, dx\right)^{\frac{q\, (p-1)}{q\, p+p-q}}\, |{\mathcal A}_k|^{\frac{q\, p+(p-q)\, N}{N\, (q\, p+p-q)}}
\end{split}
\eeq
 with
 \[
 C=\left( 4^{q-1}\, G_{p, q, N}^p\right)^{\frac{q}{q\, p+p-q}}
 \]
 smoothly depending on the data.
 
Let now $\theta/2\le h<k$. By Chebishev inequality
\beq
\label{t5}
|{\mathcal A}_k|\le \frac{1}{(k-h)^q}\int_\Omega (u-h)_+^q\, dx,
\eeq
so that, since $(u-k)_+\le (u-h)_+$,
\[
\int_{{\mathcal A}_k} k^q+(u-k)^q\, dx=k^q\, |{\mathcal A}_k|+\int_\Omega (u-k)_+^q\, dx\le \left(\frac{k^q}{(k-h)^q}+1\right)\int_\Omega (u-h)_+^q\, dx,
\]
hence, since $k/(k-h)>1$,
\[
\int_{{\mathcal A}_k} k^q+(u-k)^q\, dx\le 2\left(\frac{k}{k-h}\right)^q\int_\Omega (u-h)_+^q\, dx.
\]
Inserting the latter into \eqref{t3} and using  again \eqref{t5} on the last factor gives
\[
\int_\Omega (u-k)_+^q\, dx\le \frac{C\, \mu^{\frac{q}{ q\, p+p-q}}}{(k-h)^{q\, \frac{q\, p+(p-q)\, N}{N\, (q\, p+p-q)}}}\, \left(\frac{k}{k-h}\right)^{\frac{q^2}{ q\, p+p-q}}\, \left(\int_{\Omega}(u-k)_+^q\, dx\right)^{1+\frac{q\, p}{N\, ( q\, p+p-q)}}
\]
and Lemma \ref{propositionGiusti} yields the stated boundedness with the choices
\[
b=C\, \mu^{\frac{q}{ q\, p+p-q}},\qquad  \beta=q\, \frac{q\, p+(p-q)\, N}{N\, (q\, p+p-q)}, \qquad \gamma=\frac{q\, p}{N\, (q\, p+p-q)}, \qquad \delta=\frac{q^2}{q\, p+p-q}
\]
(notice that $b, \delta, \gamma$ are positive and $\beta$ is non-negative thanks to \eqref{pjh}). Moreover if $q<p^*$ then the previous choice of $\beta$ is positive, so that \eqref{supest} provides \eqref{supbound} by noting that $(u-\theta/2)_+\le |u|$.
If $N=1$ boundedness is trivial from $u\in W^{1,p}_0(\Omega)$ and $p>1$, while the estimate \eqref{supbound} is obtained as before, this time instead of using the Gagliardo-Nirenberg inequality \eqref{GN}, employing
\[
\|v\|_{\infty}^{\frac{q\, p+p-q}{p}}\le \frac{q\, p+p-q}{p}\,  \|Dv\|_p\, \|v\|_q^{q/p'},
\]
which can be proved by elementary means. The details are omitted.
\end{proof}

 \begin{remark}[{\em On the finite measure assumption on $\Omega$}] 
 It is worth noting that the boundedness of $u$ stated in the previous Theorem does not depend quantitatively on $\Omega$, not even in the case $q=p^*$ when $p\le N$. Indeed, the same statement holds true for arbitrary $\Omega\subseteq \R^N$, not necessarily of finite measure, as long as the  energy critical point $u\in W^{1,1}_{\rm loc}(\Omega)$ is assumed to fulfill $H(Du)\in L^1(\Omega)$, $f(\cdot, u)\, u\in L^1(\Omega)$ and, additionally,  that $u$ vanishes at infinity in the measure sense. In this case, \eqref{dfg} still holds true and the proof goes on as before since each ${\mathcal A}_k$ has finite measure for $k>0$. Moreover, assuming without loss of generality $\theta>0$ in \eqref{hypf} provides the bound \eqref{supbound} with $(|u|-\theta/2)_+$ (which is in $L^q(\Omega)$ by Sobolev, since its support has finite measure) instead of $|u|$ on the right hand side.
 \end{remark}
\begin{remark}[{\em On the form of \eqref{supbound}}]
\label{remsup}
It is worth analysing the optimality of  the form \eqref{supbound} of the $L^\infty$ bound in the simple case of the Dirichlet energy, the main point being that one cannot expect the simpler bound
\beq
\label{ref2}
\|u\|_{L^\infty(\Omega)}\le  C\,\left((\mu/c)^{\frac{N}{p}}\int_\Omega |u|^q\, dx\right)^{\frac{p}{p\, q+ (p-q)\, N}}
\eeq
to be true when $\theta>0$ with a constant $C=C(N, p, q)$ independent of $\theta$.

Let $\mu \in \ ]0, \lambda_1[$, where $\lambda_1$ is the first eigenvalue of the negative Dirichlet Laplacian in the ball $B_1$. For any $\theta>0$, the problem
\beq
\label{ref}
\begin{cases}
-\Delta u= \mu\, (u+\theta) &\text{in $B_1$},\\
u=0&\text{in $\partial B_1$}
\end{cases}
\eeq
has, by standard methods, a unique positive solution, which minimises a functional obeying the assumptions of the previous Theorem with $c=1$, $q=p=2$ and $\mu, \theta$ as in \eqref{hypf}.  By comparison, $u$ is greater or equal than the solution of 
\[
\begin{cases}
-\Delta v= \mu\, \theta &\text{in $B_1$},\\
v=0&\text{in $\partial B_1$},
\end{cases}
\]
which is $v(x)=\mu\, \theta \, (1-|x|^2)/(2\, N)$, therefore
\beq
\label{ref1}
\|u\|_{L^\infty(B_1)}\ge \frac{\mu\, \theta}{2\, N}.
\eeq
On the other hand, by testing the weak form of \eqref{ref} with $u$ and by the definition of $\lambda_1$, we get
\[
\mu\, \int_{B_1} u^2+\theta\, u\, dx=\int_{B_1} |Du|^2\, dx\ge \lambda_1\, \int_{B_1} u^2\, dx
\]
so that, rearranging and using H\"older inequality, we find
\[
(\lambda_1-\mu)\int_{B_1} u^2\, dx\le \mu\, \theta \, \int_{B_1} u\, dx\le \mu\, \theta\, \left(\int_{B_1} u^2\, dx\right)^{1/2} \omega_N^{1/2}.
\]
The latter gives the bound
\[
\left(\mu^{N/2}\, \int_{B_1} u^2\, dx\right)^{1/2}\le \omega_N^{1/2}\, \frac{\mu^{1+\frac{N}{4}}\, \theta}{\lambda_1-\mu},
\]
so that, if $\mu>0$ fulfills
\[
C(N, 2, 2)\, \omega_N^{1/2}\, \frac{\mu^{\frac{N}{4}}}{\lambda_1-\mu}<\frac{1}{2\, N},
\]
by \eqref{ref1} we have
\[
\|u\|_{L^\infty(B_1)}> C(N, 2, 2)\left(\mu^{N/2}\, \int_{B_1} u^2\, dx\right)^{1/2}
\]
and  the bound \eqref{ref2} cannot hold.
\end{remark}

Once boundedness is proved, assumption \eqref{hypf0} ensures that $f(\cdot, u)\in L^\infty(\Omega)$ and therefore $V_u=W^{1,p}_0(\Omega)$. Even if \eqref{EL} is just a differential inclusion, basic regularity theory is nevertheless at hand, thanks to the theory of quasi-minima developed by Giaquinta and Giusti in \cite{GG}.

\begin{corollary}[Regularity]\label{reg}
Suppose \eqref{hypH} holds true with $0<c< d<\infty$ and $p>1$, as well as \eqref{hypf0}. Let $u$ be a bounded energy critical point for $J$ on $W^{1,p}_0(\Omega)$ and $M_0>0$ be such that  $\|u\|_{L^\infty(\Omega)}\le M_0$. If $\Omega$ is bounded and obeys 
\beq
\label{L}
L=\inf_{x\in \partial\Omega, r>0} \frac{|B_r(x)\setminus \Omega|}{|B_r(x)|}>0,
\eeq
then
\[
\|u\|_{C^\beta(\overline\Omega)}\le C
\]
 for $C>0$ and $\beta\in (0, 1)$ depending smoothly on $p$, $c$, $d$, $M_0$, $f$, $L$ and $|\Omega|$, continuously for fixed $N$. 
 If  $\Omega$ has finite measure,
 \[
 \|u\|_{C^\beta(\Omega')}\le C
 \]
 for any $\Omega'\Subset \Omega$, with $C$ additionally depending on $\Omega'$ and $\Omega$.

 \end{corollary}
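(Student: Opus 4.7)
My plan is to establish that $u$ belongs to both De Giorgi classes $DG^{+}(\Omega)$ and $DG^{-}(\Omega)$ and then to invoke the classical Hölder regularity theory of Giaquinta--Giusti \cite{GG} (see also \cite[Chapter 7]{Giusti}). The boundedness hypothesis $\|u\|_{L^\infty(\Omega)}\le M_0$ together with \eqref{hypf0} furnishes the finite constant $M_1:=\sup_{|s|\le M_0}\|f(\cdot,s)\|_{L^\infty(\Omega)}$, so that $V_u=W^{1,p}_0(\Omega)$ and the minimality of $u$ for $J_u$ may be tested freely against every competitor in $W^{1,p}_0(\Omega)$.

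The Caccioppoli inequality is derived by standard truncation. Fix a ball $B_r\subseteq\Omega$, a concentric $B_\rho$ with $\rho<r$, a level $k\in\R$ and a cutoff $\eta\in C^\infty_c(B_r;[0,1])$ with $\eta\equiv 1$ on $B_\rho$ and $|D\eta|\le 2/(r-\rho)$. Choosing $v=u-\eta(u-k)_+$ in the minimality inequality $J_u(u)\le J_u(v)$ and using $c|z|^p\le H(z)\le d|z|^p$ together with the elementary estimate $|a+b|^p\le 2^{p-1}(|a|^p+|b|^p)$ yields, after cancellations on $\{u\le k\}$ and on $\Omega\setminus B_r$,
\[
c\int_{A_{k,r}}|Du|^p\,dx\le 2^{p-1}d\int_{A_{k,r}\setminus A_{k,\rho}}|Du|^p\,dx+\frac{C(p)\,d}{(r-\rho)^p}\int_{A_{k,r}}(u-k)_+^p\,dx+M_1\int_{A_{k,r}}(u-k)_+\,dx,
\]
with $A_{k,r}:=B_r\cap\{u>k\}$. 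Widman's hole-filling trick converts this into $\int_{A_{k,\rho}}|Du|^p\le\theta\int_{A_{k,r}}|Du|^p+(\text{lower order})$ with $\theta=1-c/(2^{p-1}d)<1$, after which the iteration lemma \cite[Lemma 6.1]{Giusti} absorbs the gradient term and delivers the $DG^{+}$ bound
\[
\int_{A_{k,\rho}}|Du|^p\,dx\le C_1\left[\frac{1}{(r-\rho)^p}\int_{A_{k,r}}(u-k)_+^p\,dx+M_1\int_{A_{k,r}}(u-k)_+\,dx\right]
\]
with $C_1=C_1(p,c,d)$. Testing instead against $u+\eta(u-k)_-$ supplies the companion $DG^{-}$ bound, so $u\in DG^{+}(\Omega)\cap DG^{-}(\Omega)$.

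The interior estimate now follows directly from Giusti's Hölder regularity theorem for members of $DG^{+}\cap DG^{-}$ \cite[Theorem 7.6]{Giusti} applied on any $\Omega'\Subset\Omega$. For the global bound under \eqref{L}, extend $u$ by zero to a ball $B\supseteq\Omega$; since $u\in W^{1,p}_0(\Omega)$ the extension lies in $W^{1,p}(B)$, and the Caccioppoli inequality above remains valid on balls centred at points of $\partial\Omega$ provided $k\ge 0$ (so that $(u-k)_+$ still vanishes outside $\Omega$), with the symmetric statement for $k\le 0$. The density hypothesis \eqref{L} supplies precisely the non-degeneracy $|B_r(x_0)\setminus\{u>k\}|\ge L|B_r(x_0)|$ required to run the boundary version of the De Giorgi iteration (cf.\ \cite[Chapter 10]{Giusti}), yielding $u\in C^\beta(\overline\Omega)$ with the stated dependence. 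The main technical chore is to propagate the continuous dependence of $C$ and $\beta$ on $p,c,d,M_0,M_1,L,|\Omega|$ throughout each step, which hinges on the factor $\theta=1-c/(2^{p-1}d)$ being continuous in $(p,c,d)$ and bounded away from $1$ on compact subsets of the parameter space.
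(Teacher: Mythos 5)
Your proof follows the same strategy as the paper: test the minimality of $J_u$ against $u-\eta(u-k)_+$, pass to a Caccioppoli inequality via the two-sided growth bound on $H$ and the $L^\infty$ bound on $u$, hole-fill, iterate with \cite[Lemma/Proposition 6.1]{Giusti} to land in a De Giorgi class, and then invoke the classical Hölder theory for such classes (including the boundary version under the measure-density condition \eqref{L}, using that $u-\eta(u-k)_+\in W^{1,p}_0(\Omega)$ when $k\ge 0$, with the reflected argument for $-u$). The only material differences are bibliographic: the paper reads the resulting class as $\mathcal{B}_p(\Omega\cup\partial\Omega,M_0,C,1,0)$ in Ladyzhenskaya--Ural'tseva \cite{LU}, precisely because the exponents there are written explicitly enough to justify the claimed smooth/continuous dependence of $\beta$ and the $C^\beta$ norm on the data, whereas you quote Giusti's Chapter 7 and Chapter 10; you would need to verify that the chosen reference tracks that dependence to the same degree, which you acknowledge but do not carry out. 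Two small omissions worth flagging: the paper treats $p>N$ separately by a trivial Morrey argument (testing against $\pm u_\pm$ to get an a priori $W^{1,p}$ bound) since the Sobolev-based De Giorgi iteration degenerates there, and the exact constant $\theta$ you display after hole-filling should be $2^{p-1}d/(c+2^{p-1}d)$ rather than $1-c/(2^{p-1}d)$, though both lie in $(0,1)$ and this does not affect the argument.
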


\begin{proof}
To make the statement precise, we rewrite \eqref{hypf0} as 
\[
\sup_{|s|\le t}\|f(\cdot, s)\|_{L^\infty(\Omega)}\le h(t)\qquad \forall t>0
\]
for a smooth, increasing $h$, so that the constants and exponents in the statement will depend smoothly just on $M_0$, through the function $h$ which in turn depends on $f$.
As $u$ is bounded, it minimises the convex functional $J_u$ defined in \eqref{defJu} over the whole $W^{1,p}_0(\Omega)$, 
we can adapt the proof of \cite[Theorem 7.1]{Giusti} to our setting, focusing on the Dirichlet boundary condition.
We focus on the first statement, since the second one will follow from it by choosing smooth $\Omega'\Subset\Omega$ obeying \eqref{L}.

Consider the case $p\le N$.
For any ball $B_R$, not necessarily contained in $\Omega$ and $0<r<R$, we fix $\eta\in C^{\infty}_c(B_R)$ such that 
\beq
\label{defeta}
{\rm supp}\, \eta\subseteq B_R, \qquad 0\le \eta\le 1, \qquad \left.\eta\right|_{B_r}\equiv 1, \qquad |D\eta|\le \frac{C(N)}{R-r},
\eeq
 and compare the minimality of $u$ with the function $v=u-\eta\, (u-k)_+$ for $k\ge 0$. Notice that $v\in W^{1,p}_0(\Omega)$ regardless of the relative position of $B_R$ and $\Omega$, and 
\[
{\rm supp}\, (v-u)\subseteq {\mathcal A}(k, R):=\{x\in \overline{\Omega}\cap B_R: u(x)\ge k\}, 
\]
\beq
\label{der}
Dv=(1-\eta)\, Du - (u-k)_+\, D\eta\qquad \text{in\ } {\mathcal A}(k, R).
\eeq
Therefore $J_u(u)\le J_u(v)$ entails
\[
\int_{{\mathcal A}(k, R)} H(Du)\, dx\le \int_{{\mathcal A}(k, R)} H(Dv)+f(x, u)\, \eta\, (u-k)_+\, dx.
\]
By using the growth conditions on $H$, \eqref{hypf0} and the boundedness of $u$, we infer
\[
c\, \int_{{\mathcal A}(k, R)}|Du|^p\, dx\le d\, \int_{{\mathcal A}(k, R)}|Dv|^p+ M_0\, \sup_{|s|\le M_0} \|f(\cdot, s)\|_{L^\infty(\Omega)}\,  dx\\
\]
so that by \eqref{der} 
\[
\begin{split}
\int_{{\mathcal A}(k, r)}|Du|^p\, dx&\le C\, \int_{{\mathcal A}(k, R)}\left[(1-\eta)^p\,|Du|^p +|D\eta|^p\, (u-k)^p\right]\, dx +C\, |{\mathcal A}(k, R)|\\
&\le C\, \int_{{\mathcal A}(k, R)\setminus {\mathcal A}(k, r)}|Du|^p\, dx +C\, \int_{{\mathcal A}(k, R)}\frac{(u-k)^p}{(R-r)^p}\, dx +C\, |{\mathcal A}(k, R)|
\end{split}
\]
for $C=C(N, p, c, d, M_0)>0$ and continuously depending of the data. By hole-filling the first term on the right, i.\,e.\,adding to the inequality $C$ times the left hand side, we get
\[
(C+1) \int_{{\mathcal A}(k, r)}|Du|^p\, dx\le C\int_{{\mathcal A}(k, R)}|Du|^p\, dx+\frac{C}{(R-r)^p}\, \int_{{\mathcal A}(k, R)}(u-k)^p\, dx +C\, |{\mathcal A}(k, R)|, 
\]
or
\[
 \int_{{\mathcal A}(k, r)}|Du|^p\, dx\le (1-\delta)\left[\int_{{\mathcal A}(k, R)}|Du|^p\, dx +\frac{1}{(R-r)^p}\int_{{\mathcal A}(k, R)} (u-k)^p\, dx +\, |{\mathcal A}(k, R)|\right]
 \]
 for $\delta=1/(C+1)$, so that \cite[Proposition 6.1]{Giusti} provides
 \[
 \int_{{\mathcal A}(k, r)}|Du|^p\, dx\le \frac{C}{(R-r)^p}\int_{{\mathcal A}(k, R)} (u-k)^p\, dx +C\, |{\mathcal A}(k, R)|.
 \]
A similar conclusion holds true for $-u$, which is a critical point of a functional with the same structural conditions. Therefore $u$ belongs to the De Giorgi class as considered in \cite[p. 81 and p. 90]{LU}, namely  ${\mathcal B}_p(\Omega\cup \partial\Omega, M_0, C, 1, 0)$, yielding the stated H\"older regularity up to the boundary. Luckily, the H\"older exponent found in \cite[Theorem 6.1 and Proposition 7.1]{LU} is very explicit in its dependance from the parameters, showing the stated dependance of $\beta$ and on the $C^\beta(\overline\Omega)$ norm of $u$.

If $p>N$, by comparing the minimality of $u$ with $\pm u_\pm$, we directly have
 \[
 \int_{\{\pm u\ge 0\}} H(Du)\, dx\le \int_{\{\pm u\ge 0\}} f(x, u)\, u_{\pm}\, dx\le C
 \]
 with a constant $C$ smoothly depending on $M_0$.
Extending $u$ as $0$ outside $\Omega$, using the growth condition \eqref{hypH} and Morrey embedding  yields $u\in C^{1-N/p}(\R^N)$ with a uniform bound on the norm.  
\end{proof}

\begin{corollary}[Minimum principles]\label{smp}
Let \eqref{hypH} hold with $0<c< d<\infty$ and $p>1$, as well as \eqref{hypf0}, and let $u$  be a bounded energy critical point for $J$ on $W^{1,p}_0(\Omega)$, where $\Omega$ has finite measure.
\begin{enumerate}
\item
If $f(\cdot , t)\ge 0$ for $t\le 0$,
then $u\ge 0$ in $\Omega$.
\item
If $u\ge 0$ and there exist $A\ge 0$, $\delta>0$ such that for all $t\in\,  ]0, \delta[$
\beq
\label{smph}
\inf_\Omega f(\cdot, t)\ge -A\, t^{p-1},
\eeq
 then, in each connected component of $\Omega$, $u$ is either strictly positive or vanishes identically.
 \end{enumerate}
\end{corollary}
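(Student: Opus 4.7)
The plan for (1) is a direct variational comparison: since $u$ is bounded, \eqref{hypf0} gives $f(\cdot, u) \in L^\infty(\Omega)$, so $V_u = W^{1,p}_0(\Omega)$ and $u_+ \in V_u$. Minimality yields $J_u(u_+) \geq J_u(u)$, and exploiting $Du_+ = Du\, \chi_{\{u > 0\}}$ together with $H(0) = 0$, the gradient contribution of $J_u(u_+) - J_u(u)$ equals $-\int_{\{u < 0\}} H(Du)\, dx \leq -c \int_\Omega |Du_-|^p\, dx$, while the reaction contribution equals $-\int_\Omega f(\cdot, u)\, u_-\, dx \leq 0$ by the sign hypothesis on $f$. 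Both non-positive terms must then vanish, giving $\|Du_-\|_p = 0$ and hence $u_- \equiv 0$ by Poincar\'e.

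For (2) I would use the continuity of $u$ granted by Corollary \ref{reg} to reduce the question to showing that $Z := \{u = 0\} \cap \Omega_0$ is open in each connected component $\Omega_0$ of $\Omega$; then connectedness forces $Z = \emptyset$ or $Z = \Omega_0$. Fix $x_0 \in Z$ and select $r > 0$ with $\overline{B_{2r}(x_0)} \subset \Omega_0$ and $0 \leq u < \delta$ throughout $B_{2r}(x_0)$, which is possible by continuity since $u(x_0)=0$. Proposition \ref{propositionEL} applies and supplies a measurable selection $\xi \in L^{p'}(\Omega; \R^N)$ of $\partial H(Du)$ satisfying \eqref{EL}. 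The Carath\'eodory regularity of $f$ and $p > 1$ extend the assumed bound $f(x, t) \geq -A\, t^{p-1}$ to $t = 0$, so testing \eqref{EL} with any non-negative $\varphi \in W^{1,p}_0(B_{2r}(x_0))$ yields
\[
\int_{B_{2r}(x_0)} (\xi, D\varphi)\, dx + A \int_{B_{2r}(x_0)} u^{p-1}\, \varphi\, dx \geq 0,
\]
that is, $u$ is a non-negative weak super-solution of a $p$-Laplace-type differential inclusion on $B_{2r}(x_0)$. Convexity of $H$ and $c|z|^p \leq H(z) \leq d|z|^p$ deliver the structural estimates $(\xi, Du) \geq c|Du|^p$ (apply the subgradient inequality at the test point $z=0$) and $|\xi| \leq K(p, d)\, |Du|^{p-1}$ (from local Lipschitz control of $H$).

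The next step is to invoke a Trudinger-type weak Harnack inequality for such super-solutions, producing $s > 0$ and $C > 0$ with
\[
\left( \intmed_{B_r(x_0)} u^s\, dx \right)^{1/s} \leq C\, \essinf_{B_{r/2}(x_0)} u.
\]
Continuity and $u(x_0) = 0$ force the right-hand side to be zero, so $u \equiv 0$ on $B_r(x_0)$, i.e.\ $B_r(x_0) \subset Z$, establishing that $Z$ is open and completing the dichotomy argument. The main obstacle is precisely this weak Harnack step: since $H$ need not be differentiable, only a differential inclusion is available and the classical Trudinger argument for a scalar quasilinear PDE does not apply verbatim. However, the two structural estimates above are exactly what is needed to run a Moser iteration with negative and logarithmic exponents, or equivalently to place $u$ in a De Giorgi class $DG^-_p$ of super-solutions and invoke the weak Harnack inequality established for that class in the quasi-minima theory of \cite{GG, Giusti}.
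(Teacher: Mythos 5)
Your proof of part (1) is essentially the same as the paper's: compare $J_u(u)$ with $J_u(u_+)$, split the gradient term using $H(0)=0$, use the sign condition on $f$, and conclude via $H\ge c|z|^p$ and Poincar\'e.

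For part (2) you reach the same destination (De Giorgi class $DG^-_p$ and weak Harnack) but by a more indirect road. The paper \emph{never} invokes the Euler--Lagrange inclusion \eqref{EL}: it compares $J_u(u)\le J_u(v)$ directly with $v=u+\eta\,(k-u)_+$, where $\eta$ is the cutoff from \eqref{defeta}, and uses the one-sided bound \eqref{smph} and the Giaquinta--Giusti hole-filling trick (as in the proof of Corollary \ref{reg}) to land the Caccioppoli inequality
\[
\int_{{\mathcal B}(k,r)}|Du|^p\,dx\le \frac{C}{(R-r)^p}\int_{{\mathcal B}(k,R)}(k-u)_+^p\,dx+A\,k^p\,|{\mathcal B}(k,R)|
\]
for all $k\ge 0$, after which $u\in DG^-_p(\Omega)$ and the weak Harnack inequality of DiBenedetto--Trudinger \cite{DBT} gives the strong minimum principle. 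Your route goes through Proposition \ref{propositionEL} to get a measurable selection $\xi$, extracts the ellipticity estimate $(\xi,Du)\ge c|Du|^p$ and the growth bound $|\xi|\le K|Du|^{p-1}$, and then appeals to a Trudinger/Moser or De Giorgi class argument. This is workable, but two observations are in order. First, the step you identify as ``the main obstacle'' --- running Moser iteration or placing $u$ in $DG^-_p$ using the structural bounds on $\xi$ --- is precisely the nontrivial content, and you leave it as an invocation of the classical theory; the paper instead derives $DG^-_p$ membership \emph{directly} from minimality, bypassing both the selection $\xi$ and the structure conditions. Second, the weak Harnack citation should be DiBenedetto--Trudinger \cite{DBT} (or Serrin/Trudinger for the quasilinear route), not \cite{GG, Giusti}, where only H\"older continuity of quasi-minima is established. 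The direct variational approach is both shorter and dispenses with the growth estimate $|\xi|\lesssim|Du|^{p-1}$, which plays no role in the paper's proof; your approach is correct but buys nothing extra here.
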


\begin{proof}
We test the minimality with respect to $J_u$ of $u$ against $u_+$, to get
\[
\int_{\{u<0\}} H(Du)\, dx\le \int_\Omega f(x, u)\, (u-u_+)\, dx=-\int_{\{u\le 0\}} f(x, u)\, u_-\, dx
\]
and since $f(\cdot, u)\ge 0$ on $u\le 0$, the term on the left must vanish, implying that $u\ge 0$ a.\,e.. To prove the strict positivity of $u$ we further test the minimality of $u$ with $v=u+\eta\, (k-u)_+$ for $k\ge 0$ and $\eta$ as in \eqref{defeta} with $B_R\subseteq \Omega$. Since 
\[
{\rm supp}\, (u-v)\subseteq \{x\in B_R: u(x)\le k\}=:{\mathcal B}(k, R)
\]
the minimality relation $J_u(u)\le J_u(v)$ reads
\[
\int_{{\mathcal B}(k, R)} H(Du)\, dx\le \int_{{\mathcal B}(k, R)} H(Dv)+ f(x, u)\, (u-v)\, dx.
\]
Since $u$ is bounded, we can assume \eqref{smph} holds true for all $t\ge 0$, with a bigger constant $A$ also depending on $\delta$ and $\|u\|_\infty$.
Since $u-v=-\eta\, (k-u)_+\le 0$, \eqref{smph} gives
\[
\begin{split}
\int_{{\mathcal B}(k, R)} H(Du)\, dx&\le \int_{{\mathcal B}(k, R)} H(Dv)+ A\, u^{p-1}\, \eta\, (k-u)_+\, dx.\\
&\le \int_{{\mathcal B}(k, R)} H(Dv)\, dx +A\, k^p\, |{\mathcal B}(k, R)|.
\end{split} 
\]
Proceeding with the same calculations of the previous proof one arrives via hole-filling at
\[
\int_{{\mathcal B}(k, r)} |Du|^p\, dx\le \frac{C}{(R-r)^p}\int_{{\mathcal B}(k, R)}(k-u)_+^p\, dx +A\, k^p\, |{\mathcal B}(k, R)| 
\]
for all $k\ge 0$. Therefore $u\ge 0$ belongs to the homogeneous De Giorgi class $DG^-_p(\Omega)$ as in \cite{DBT}. A weak Harnack inequality holds true in this setting (see \cite[Theorem 2]{DBT}), from which the strong minimum principle readily follows.
\end{proof}

\section{Dirichlet eigenvalues}
\label{DE}

For $H:\R^N\to [0, \infty[$ convex and obeying \eqref{hypH} we can define its {\em first Dirichlet eigenvalue} as
\beq
\label{lambda1}
\lambda_{1, H}(\Omega)=\inf\left\{\int_\Omega H(Dv)\, dx: v\in W^{1,p}_0(\Omega), \int_\Omega |v|^p\, dx=1\right\}.
\eeq
A {\em first normalized Dirichlet eigenfunction} for $\lambda_{1, H}(\Omega)$ is a function solving the minimum problem for $\lambda_{1, H}(\Omega)$ with its constraints. When $\Omega$ has finite measure, the embedding $W^{1,p}_0(\Omega)\hookrightarrow L^p(\Omega)$ is compact, therefore, if  in \eqref{hypH} we assume $c>0$ and $p>1$, problem \eqref{lambda1} is always solvable and its value is positive.

The value \eqref{lambda1} makes sense in general, but we focus here on the $p$-positively homogeneous setting for $p>1$. In this case, $\lambda_{1, H}(\Omega)$ is related to the best constant for the inequality
\beq
\label{bc}
\int_\Omega |v|^p\, dx\le C_{\rm opt}(H, \Omega)\, \int_\Omega H(Dv)\, dx\qquad \forall v\in W^{1, p}_0(\Omega)
\eeq
which, by positive $p$-homogeneity, satisfies $C_{\rm opt}(H, \Omega)=1/\lambda_{1, H}(\Omega)$. Moreover, if $u$ solves problem \eqref{lambda1}, any {\em positive} multiple of $u$ attains the best constant in \eqref{bc}, and will be called a {\em first Dirichlet eigenfunction}.
Notice that we are not assuming any regularity on $H$ except the natural local Lipschitz one and, more substantially, we do not assume that $H$ is even.
In the possibly non-even setting, it is convenient to define
\beq
\label{lambdapm}
\begin{split}
\lambda_{1, H}^+(\Omega)&=\inf\left\{\int_\Omega H(Dv)\, dx: v\in W^{1,p}_0(\Omega), \int_\Omega |v|^p\, dx=1, \ v\ge 0\right\}\\
\lambda_{1, H}^-(\Omega)&=\inf\left\{\int_\Omega H(Dv)\, dx: v\in W^{1,p}_0(\Omega), \int_\Omega |v|^p\, dx=1, \ v\le 0\right\},
\end{split}
\eeq
which may in principle be different. We say that $u\ge 0$ is a {\em first normalized Dirichlet positive eigenfunction} for $\lambda^{+}_{1, H}(\Omega)$ if $u$ realises the minimum for $\lambda_{1, H}^+(\Omega)$ (with its constraints) and that $u\le 0$ is a {\em first normailzed negative  Dirichlet eigenfunction} if it does so for $\lambda_{1, H}^-(\Omega)$. Similar considerations as in the unconstrained eigenvalue $\lambda_{1, H}(\Omega)$ are valid for sign-constrained ones and we will drop the term normalized accordingly.  

 Notice that, without any evennes assumption on $H$, it may {\em a-priori} no longer be true that if $u$ is a first positive eigenfunction then $-u$ is a first negative eigenfunction. Moreover, it holds
\beq
\label{checkH}
\lambda_{1, H}^-(\Omega)=\lambda_{1, \check{H}}^+(\Omega), \qquad \check{H}(z)=H(-z),
\eeq
since $v$ obeys the constraints involving $\lambda_{1, H}^-(\Omega)$ if and only if $-v$ does so for $\lambda_{1, H}^+(\Omega)$.

\begin{proposition}
Suppose $H:\R^N\to [0, \infty[$ is continuous and positively $p$-homogeneoues, for $p\ge 1$. Then, for any open $\Omega\subseteq \R^N$  
\beq
\label{minlambda}
\lambda_{1, H}(\Omega)=\min\left\{\lambda_{1, H}^+(\Omega), \lambda_{1, H}^-(\Omega)\right\}.
\eeq
\end{proposition}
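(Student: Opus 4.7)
The inequality $\lambda_{1,H}(\Omega)\le \min\{\lambda_{1,H}^+(\Omega),\lambda_{1,H}^-(\Omega)\}$ is immediate: both admissible classes in \eqref{lambdapm} are subclasses of the admissible class in \eqref{lambda1}, so the infimum is no larger over the full class. For the reverse inequality, the plan is the standard ``split into positive and negative parts'' argument, taking advantage of the positive $p$-homogeneity of $H$ (rather than its evenness, which is not assumed).

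More precisely, I would take an arbitrary $v\in W^{1,p}_0(\Omega)$ with $\int_\Omega |v|^p\,dx=1$ and decompose $v=v_+-v_-$, with $v_\pm\in W^{1,p}_0(\Omega)$ by the standard truncation stability of Sobolev functions. Recalling that $Dv=0$ a.\,e.\,on $\{v=0\}$ and that $Dv_+$, $Dv_-$ have disjoint essential supports (namely $\{v>0\}$ and $\{v<0\}$), together with $H(0)=0$ (which follows from continuity and positive $p$-homogeneity letting $\lambda\downarrow 0$), I would split
\begin{equation*}
\int_\Omega H(Dv)\,dx=\int_{\{v>0\}}H(Dv_+)\,dx+\int_{\{v<0\}}H(-Dv_-)\,dx=\int_\Omega H(Dv_+)\,dx+\int_\Omega H(-Dv_-)\,dx,
\end{equation*}
where in each integral on the right I have extended by $H(0)=0$ on the complement of the relevant set.

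Set $a=\int_\Omega v_+^p\,dx$ and $b=\int_\Omega v_-^p\,dx$, so $a+b=1$. If $a>0$, then $v_+/a^{1/p}$ is non-negative with unit $L^p$ norm, hence admissible for $\lambda_{1,H}^+(\Omega)$, and positive $p$-homogeneity of $H$ gives $\int_\Omega H(Dv_+)\,dx\ge a\,\lambda_{1,H}^+(\Omega)$; an analogous bound, using that $-v_-/b^{1/p}$ is admissible for $\lambda_{1,H}^-(\Omega)$, yields $\int_\Omega H(-Dv_-)\,dx\ge b\,\lambda_{1,H}^-(\Omega)$ when $b>0$. The degenerate cases $a=0$ or $b=0$ are handled by interpreting the corresponding integral as zero (since $Dv_\pm=0$ a.\,e.\,where $v_\pm=0$) and using only the non-trivial half of the inequality. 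Adding the two bounds gives
\begin{equation*}
\int_\Omega H(Dv)\,dx\ge a\,\lambda_{1,H}^+(\Omega)+b\,\lambda_{1,H}^-(\Omega)\ge \min\{\lambda_{1,H}^+(\Omega),\lambda_{1,H}^-(\Omega)\},
\end{equation*}
and taking the infimum over $v$ closes the proof. No real obstacle is expected; the only point that deserves attention is the rigorous identification of the gradient of $v_\pm$ via the truncation calculus in $W^{1,p}_0(\Omega)$, ensuring the orthogonal-support decomposition of $|Dv|$ used above, together with the use of $H(0)=0$ which relies precisely on positive $p$-homogeneity and continuity.
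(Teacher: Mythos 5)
Your proof is correct and takes essentially the same route as the paper: decompose $v=v_+-v_-$, use $H(0)=0$ and the disjointness of the supports of $Dv_+$ and $D(-v_-)$ to split $\int_\Omega H(Dv)\,dx$, then bound each piece via positive $p$-homogeneity and the definition of $\lambda_{1,H}^\pm(\Omega)$. The only cosmetic difference is that the paper phrases the conclusion by picking a near-minimiser $v$ with $\int_\Omega H(Dv)\,dx<\lambda_{1,H}(\Omega)+\eps$ and letting $\eps\downarrow 0$, whereas you establish the pointwise-in-$v$ inequality and take the infimum at the end — logically equivalent.
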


\begin{proof}
 Clearly 
 \[
 \lambda_{1, H}(\Omega)\le \min\left\{\lambda_{1, H}^+(\Omega), \lambda_{1, H}^-(\Omega)\right\},
 \]
 so we prove the opposite inequality. To this end, notice that all the eigenvalues are finite and, for arbitrary $\eps>0$, choose $v$ such that $\|v\|_p=1$ and
 \[
\int_\Omega H(Dv)\, dx< \lambda_{1, H}(\Omega)+\eps.
 \]
If $v_-\equiv 0$ a.\,e., then $v\ge 0$ and  
\[
\min\left\{\lambda_{1, H}^+(\Omega), \lambda_{1, H}^-(\Omega)\right\}\le \lambda_{1, H}^+(\Omega)\le \int_\Omega H(Dv)\, dx<\lambda_{1, H}(\Omega)+\eps,
\]
so that letting $\eps\downarrow 0$ we have
\beq
\label{ref}
\min\left\{\lambda_{1, H}^+(\Omega), \lambda_{1, H}^-(\Omega)\right\}\le \lambda_{1, H}(\Omega).
\eeq
A similar conclusion holds  if $v_+=0$, so we can assume that both $v_+$ and $v_-$ do not vanish identically. Thus $v_+/\|v_+\|_p$ and $-v_-/\|v_-\|_p$ are admissible competitors for the corresponding problems \eqref{lambdapm}, giving, by $p$-homgeneity
\[
\lambda_{1, H}^\pm(\Omega)\, \|v_\pm\|_p^p \le  \int_\Omega H( D(\pm v_\pm))\, dx.
\]
From $1=\|v\|_p=\|v_+\|_p+\|v_-\|_p$ and $H(0)=0$ we infer 
\[
\begin{split}
\min\left\{\lambda_{1, H}^+(\Omega), \lambda_{1, H}^-(\Omega)\right\}&=\min\left\{\lambda_{1, H}^+(\Omega), \lambda_{1, H}^-(\Omega)\right\} \left(\|v_+\|_p^p+\|v_-\|_p^p\right)\\
&\le \int_\Omega H(Dv_+)\, dx+\int_\Omega H(D(-v_-))\, dx\\
&\le \int_\Omega H(Dv)\, dx\le \lambda_{1, H}(\Omega)+\eps
\end{split}
\]
so that \eqref{ref} follows again by letting $\eps\downarrow 0$.

\end{proof}

In general, we do not expect that the two values in \eqref{lambdapm} coincide, although we have not been able to produce an explicit example. If equality is always true, however,  it cannot follow from convexity, $p$-homogeneity and modularity of the energy alone, as the following toy example shows.

\begin{ex}
Let $H:\R^2\to [0, \infty[$ be
\[
H(x, y)= a\, x_+^2 + x_-^2+a\, y_+^2+ y_-^2
\]
for some $a>1$.
We look at $\R^3$ as the set of functions $u:\Omega\to \R$ with $\Omega=\{0, 1, 2\}$, whose "gradient" is $Du:=(u(0)-u(1), u(1)-u(2))\in \R^2$. The corresponding positive and negative Dirichlet eigenvalue problems are, respectively,
\[
\lambda_{1, H}^\pm(\Omega)=\inf\left\{\sum H(Du) :u(0)=u(2)=0, u(1)=\pm1\right\}.
\]
The function $H$ is convex, positively $2$-homogeneous and modular, meaning
\[
H(Du)+H(Dv)=H(D\max\{u, v\})+H(D\min\{u, v\}),
\]
 due to \cite[Remark 2.2]{GM}, so the truncation arguments employed in the previous proof still stand. However
\[
\lambda_{1, H}^+(\Omega)=1<a= \lambda_{1, H}^-(\Omega).
\]
A similar construction can clearly be generalised to finite sets $\Omega$ containing an arbitrary number of points.
\end{ex}

A simple case where the sign-constrained eigenvalues are equal is when $\Omega$ enjoys a symmetry opposite with $H$.
In order to make a precise statement, let
\[
{\rm Sym}\, (\Omega)=\left\{T\in O(N):  T(\Omega)=\Omega+x_0 \text{ for some $x_0\in \R^N$} \right\}.
\]
\begin{proposition}
Under the assumptions of the previous proposition, suppose that 
\beq
\label{sym}
{\rm Sym}\, (\Omega)\cap -{\rm Sym}\, (H)\ne \emptyset.
\eeq
Then 
\beq
\label{propositionlambda}
\lambda_{1, H}^+(\Omega)=\lambda_{1, H}^-(\Omega)=\lambda_{1, H}(\Omega).
\eeq
\end{proposition}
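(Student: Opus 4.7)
The plan is to use the symmetry $T \in {\rm Sym}\,(\Omega) \cap -{\rm Sym}\,(H)$ to build an energy-preserving bijection between the admissible sets of the two sign-constrained eigenvalue problems. First I would pick $x_0 \in \R^N$ such that $T(\Omega) = \Omega + x_0$ and consider the affine diffeomorphism $\Phi(x) := T x - x_0$. Then $\Phi(\Omega) = T(\Omega) - x_0 = \Omega$ and, since $T \in O(N)$, $|\det D\Phi| = 1$. The map $\Sigma(v)(x) := -v(\Phi(x))$ therefore defines a bijection of $W^{1,p}_0(\Omega)$ onto itself which exchanges the cones $\{v \ge 0\}$ and $\{v \le 0\}$ and preserves the $L^p$ norm, by a plain change of variables.

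The main computation concerns the gradient. By the chain rule $D\Sigma(v)(x) = -T^t (Dv)(\Phi(x))$. The crucial algebraic observation is that ${\rm Sym}\,(H)$ is a subgroup of $O(N)$, so from $-T \in {\rm Sym}\,(H)$ I deduce $(-T)^{-1} = -T^t \in {\rm Sym}\,(H)$ as well; by the defining property of ${\rm Sym}\,(H)$ combined with positive $p$-homogeneity, this yields $H(-T^t z) = H(z)$ for every $z \in \R^N$. Consequently $H(D\Sigma(v)(x)) = H((Dv)(\Phi(x)))$ pointwise, and the change of variables $y = \Phi(x)$ (Jacobian $1$, $\Phi(\Omega) = \Omega$) gives
\begin{equation*}
\int_\Omega H(D\Sigma(v))\, dx = \int_\Omega H(Dv)\, dy, \qquad \int_\Omega |\Sigma(v)|^p\, dx = \int_\Omega |v|^p\, dy.
\end{equation*}

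Thus $\Sigma$ restricts to an energy-preserving bijection between the admissible sets for $\lambda_{1,H}^+(\Omega)$ and $\lambda_{1,H}^-(\Omega)$, forcing the equality $\lambda_{1,H}^+(\Omega) = \lambda_{1,H}^-(\Omega)$; combined with \eqref{minlambda} this immediately yields \eqref{propositionlambda}. I do not foresee any serious obstacle; the only step worth a moment of care is reconciling the transpose $-T^t$ produced by the chain rule with the symmetry condition $-T \in {\rm Sym}\,(H)$, which is handled by the group structure of ${\rm Sym}\,(H)$.
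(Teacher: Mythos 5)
Your proposal is correct and follows essentially the same argument as the paper: compose with the orthogonal symmetry (and translation) and flip the sign, obtaining an energy-preserving, $L^p$-isometric bijection that swaps the two sign-constrained cones, then invoke \eqref{minlambda}. In fact you are a bit more careful than the paper on one small point: the chain rule produces $-T^t$, not $-T$, and you correctly reconcile this with the hypothesis $-T\in{\rm Sym}(H)$ via the subgroup structure of ${\rm Sym}(H)\subseteq O(N)$, a step the paper glosses over.
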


\begin{proof}
Fix  $T\in {\rm Sym}\, (\Omega)\cap -{\rm Sym}\, (H)$, so that it holds
\[
H(-T(z))=H(z)\qquad \forall z\in \R^N.
\]
Then, for any $v\ge 0$ in $W^{1,p}_0(\Omega)$ with $\|v\|_p=1$, the function $w=-v\circ T$ is non positive, belongs to $W^{1,p}_0(\Omega-x_0)$ for some $x_0\in \R^N$ and fulfills $\|w\|_p=1$. By translation invariance 
\[
\lambda_{1, H}(\Omega)=\lambda_{1, H}(\Omega-x_0)
\]
and
\[ 
\int_{\Omega-x_0} H(Dw)\, dx=\int_{\Omega-x_0} H(-T Dv\circ T)\, dx=\int_\Omega H(Dv)\, dx.
\]
Therefore $\lambda_{1, H}^+(\Omega)\ge \lambda_{1, H}^-(\Omega)$. Proceeding symmetrically from $v\le 0$ we get the opposite inequality, while applying \eqref{minlambda} concludes the proof of \eqref{propositionlambda}.
\end{proof}

\begin{remark}
By the translation invariance of the energy, the same conclusion holds true if each connected component of $\Omega$ separately fulfills assumption \eqref{sym}.
In particular, \eqref{propositionlambda} holds true if each connected component has at least one axis of symmetry.  Using this fact it is readily shown that  \eqref{propositionlambda} always holds true for arbitrary  open  $\Omega\subseteq \R$. Indeed, open connected  subset of $\R$ are intervals, half-lines or the whole $\R$. But if $\Omega$ has an unbounded connected component, a scaling argument shows that all eigenvalues vanish, while if $\Omega$ has only bounded connected components the previous Proposition applies, since open intervals are symmetric with respect to their midpoint. 
\end{remark} 

\begin{proposition}\label{regeigen}
Suppose that $H:\R^N\to [0, +\infty[$ is convex, positively $p$-homogeneous for some $p>1$, vanishing only at the origin and that $\Omega$ is connected with finite measure. If $u$ is a first Dirichlet eigenfunction (sign-constrained or not), then $u\in C^\alpha(\Omega)$ and never vanishes in $\Omega$. 
\end{proposition}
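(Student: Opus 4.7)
The plan is to realise any first positive Dirichlet eigenfunction (normalised by $\|u\|_p=1$) as the global minimiser of a Brezis--Oswald functional fulfilling the structural assumptions of Section~\ref{Sregularity}, invoke the regularity theory and minimum principle proved therein, and then reduce the sign-unconstrained case.

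\textbf{Positive eigenfunction case.} Let $u\ge 0$ with $\|u\|_p=1$ achieve $\lambda:=\lambda^+_{1,H}(\Omega)>0$ and set $F(x,t)=\lambda\, t_+^p$, $f(x,t)=\lambda p\, t_+^{p-1}$ and
\[
J(v)=\int_\Omega H(Dv)-F(x,v)\,dx, \qquad v\in W^{1,p}_0(\Omega).
\]
For an arbitrary such $v$, the locality of the weak gradient gives $\int H(Dv_+)=\int_{\{v>0\}}H(Dv)\le\int H(Dv)$, while $v_+$ (after normalisation) is a non-negative competitor in the definition of $\lambda^+_{1,H}$, so $\int H(Dv_+)\ge\lambda\|v_+\|_p^p$, whence
\[
J(v)\ge \int_\Omega H(Dv_+)-\lambda v_+^p\,dx\ge 0=J(u).
\]
Thus $u$ globally minimises $J$ on $W^{1,p}_0(\Omega)$; having zero weak slope it is a critical point in the sense of \cite{DG} and, by Remark~\ref{remc}, an energy critical point of $J$. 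The assumptions $c|z|^p\le H(z)\le d|z|^p$ with $0<c<d<\infty$ follow from convexity, positive $p$-homogeneity and the hypothesis that $H$ vanishes only at the origin, while $f$ trivially satisfies \eqref{hypf0} and \eqref{hypf} with $q=p\le p^*$. Theorem~\ref{thbound} yields $u\in L^\infty(\Omega)$; Corollary~\ref{reg} gives $u\in C^\alpha(\Omega')$ for every $\Omega'\Subset\Omega$; and Corollary~\ref{smp}(2) — with \eqref{smph} automatic because $f\ge 0$ — combined with $u\not\equiv 0$ and the connectedness of $\Omega$ forces $u>0$ on $\Omega$. The negative eigenfunction case reduces to this one via $\check H(z)=H(-z)$ and \eqref{checkH}.

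\textbf{Sign-unconstrained case.} Let $u$ be a first sign-unconstrained eigenfunction with $\|u\|_p=1$. The equality chain underpinning \eqref{minlambda} shows that, if both $u_+$ and $u_-$ were nontrivial, then $u_+/\|u_+\|_p$ and $-u_-/\|u_-\|_p$ would respectively realise $\lambda^+_{1,H}(\Omega)$ and $\lambda^-_{1,H}(\Omega)$. The previous step would then give $u_+>0$ throughout the connected $\Omega$, contradicting $u_-\not\equiv 0$. So $u$ has constant sign and the sign-constrained conclusion transfers.

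The delicate point is the implication \emph{global minimiser of $J$ $\Rightarrow$ energy critical point of $J$}: since $J$ is a difference of convex functionals it is not itself convex, but a global minimiser has zero weak slope in any reasonable topology, which is precisely what triggers Remark~\ref{remc}. Alternatively, one verifies the energy critical point property directly from the right-derivative of $t\mapsto J(u+t(w-u))$ at $t=0^+$, using the subdifferential inequality $H(Dw)-H(Du)\ge H'(Du;D(w-u))$ together with the monotonicity of the convex difference quotients to interchange the limit and the integral.
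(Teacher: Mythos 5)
Your proposal is correct and follows essentially the same approach as the paper: in both arguments one realises the eigenfunction as a global minimiser of an auxiliary functional (with the truncated reaction $\lambda\, t_+^{p-1}$ in the sign-constrained case), uses that a global minimiser has zero weak slope and hence, via Remark~\ref{remc}, is an energy critical point, applies Theorem~\ref{thbound}, Corollary~\ref{reg} and Corollary~\ref{smp}, and reduces the unconstrained case by showing (through the equality case in \eqref{minlambda}) that a sign-changing eigenfunction would force $u_+$ to be a first positive eigenfunction and hence strictly positive, a contradiction. The only cosmetic difference is in organisation (you handle positive eigenfunctions first and reduce the unconstrained case to them, while the paper treats the unconstrained case in parallel), and you make explicit the global-minimiser~$\Rightarrow$~energy-critical-point step that the paper leaves tacit.
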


%

\begin{proof}
We first observe that any eigenfunction is a critical point for a functional of the form \eqref{J} obeying \eqref{hypH}, \eqref{hypf0} and \eqref{hypf}. Since $H$ vanishes only at the origin and $H$ is continuous
\[
0<\inf_{|z|=1} H(z)\le \sup_{|z|=1} H(z)<\infty,
\]
so that by the positive $p$-homogeneity of $H$, \eqref{hypH} holds with $0<c\le d<\infty$.
In the unconstrained case, for the functional 
\[
J(v)=\int_\Omega H(Dv)-\lambda_{1, H}(\Omega) |v|^p\, dx
\]
it  holds $J\ge 0$ on $W^{1, p}_0(\Omega)$ by construction and $J(u)=0$, hence $u$ minimises $J$ and is therefore an energy critical point for it.  In the sign-constrained case, by considering $\check{H}$ instead of $H$ and using \eqref{checkH}, it suffices to consider the case when $u$ is a positive eigenfunction.
The functional 
\[
J_+(v)=\int_\Omega H(Dv)-\lambda_{1, H}^+(\Omega)\,  v_+^p\, dx
\]
vanishes at $u$ and it holds $J_+(v)\ge J_+(v_+)$. But $J_+(v_+)\ge 0$ by the definition of $\lambda^+_{1, H}(\Omega)$, so again $u$ minimises $J_+$. Clearly in all cases \eqref{hypf} holds true with $q=p$  and Theorem \ref{thbound} implies boundedness, and thus continuity by Corollary \ref{reg}, of all eigenfunctions. Moreover,  Corollary \ref{smp} applies for positive eigenfunction, since in this case $f(x, t)=t_+^{p-1}\ge 0$, yielding the first statement for positive eigenfunctions.

 For an arbitrary eigenfunction $u$, we claim that either $u_+$ or $-u_-$ are eigenfunctions for $\lambda^{\pm}_{1, H}(\Omega)$ respectively, regardless of any assumption on $\Omega$. Indeed, it suffices to prove it for normalized eigenfunctions, so let $u$ be a first Dirichlet eigenfunction for $\lambda_{1, H}(\Omega)$ such that $\|u\|_p=1$. If $u$ is of constant sign, the claim is trivial. Otherwise,  both $u_+$ and $u_-$ are non trivial and then $\pm u_\pm/\|u_{\pm}\|_p$ are admissible for the corresponding problems in \eqref{lambdapm}. By the positive $p$-homogeneity of $H$
\[
\lambda_{1, H}^\pm(\Omega)\le \frac{1}{\|u_\pm\|_p^p}\, \int_\Omega H(D(\pm u_\pm))\, dx
\]
which, summed up,  imply 
\[
\begin{split}
\lambda_{1, H}(\Omega)&=\min\left\{\lambda_{1, H}(\Omega)^+, \lambda_{1, H}^-(\Omega)\right\}\left(\|u_+\|_p^p+\|u_-\|_p^p\right)\\
&\le \int_\Omega H(Du_+)+ H(D(-u_-))\, dx=\lambda_{1, H}(\Omega),
\end{split}
\]
so that all inequalities above are actually equalities, proving the claim. It follows that if $u_+\ne 0$, then $u_+$ is a positive eigenfunction and therefore never vanishes by the previous part of the proof, so $u_-= 0$ and $u=u_+>0$ in $\Omega$. A similar conclusion holds true if $u_-\ne 0$, yielding $u=-u_-<0$ in $\Omega$ and concluding the proof.

\end{proof}

\section{Main result}

In order to prove a Brezis-Oswald comparison principle, we'll need the following generalisation to the non-smooth setting of the Picone inequality.

\begin{lemma}[Picone inequality]\label{thpicone}
Suppose $H$ is convex and positively $p$-homogeneous. Let $u, v\in  W^{1,1}_{\rm loc}(\Omega)\cap L^\infty_{\rm loc}(\Omega)$ be such that $\inf_\Omega u>0$, $\inf_\Omega v\ge 0$ and let  $\xi\in {\mathcal L}(\Omega; \R^N)$ be such that $\xi(x)\in \partial H(Du(x))$ a.\,e.. Then it holds
\beq
\label{picone}
\frac{1}{p}\left(\xi , D \frac{v^p}{u^{p-1}}\right)\le H(Dv)\qquad \text{a.\,e.\,in $\Omega$. }
\eeq
Suppose additionally that  $H$ is strictly convex, $\Omega$ is connected and equality holds a.\,e.\,in \eqref{picone}. Then $v=k\, u$ for some $k>0$.
\end{lemma}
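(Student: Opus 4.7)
The strategy is to derive \eqref{picone} pointwise by applying the subgradient inequality for $H$ at the point $t Du$ with $t = v/u$, after sharpening Proposition~\ref{Lcoer} to a pointwise Euler identity. Indeed, the convexity inequality $(t^p - 1) H(z) \ge (t-1)(\xi, z)$ holds for \emph{all} $t > 0$: dividing by $t-1$ with both signs and letting $t \to 1$ (equivalently, the nonnegative smooth function $g(t) := (t^p-1) H(z) - (t-1)(\xi, z)$ has minimum at $t=1$, so $g'(1) = 0$) yields the sharper identity $(\xi, z) = p\, H(z)$ for every $\xi \in \partial H(z)$, complementing \eqref{phom1}.

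Under the hypothesis $\inf_\Omega u > 0$ together with the local boundedness of $u, v$, the ratio $t = v/u$ lies in $W^{1,1}_{\rm loc}(\Omega) \cap L^\infty_{\rm loc}(\Omega)$, and a chain-rule computation gives
$$\frac{1}{p}\left(\xi,\, D\frac{v^p}{u^{p-1}}\right) = t^{p-1}(\xi, Dv) - \frac{p-1}{p} t^p (\xi, Du) = t^{p-1}(\xi, Dv) - (p-1) t^p H(Du),$$
where the last step invokes the Euler identity. By the positive $p$-homogeneity of $H$, an elementary computation from the subgradient inequality at $Du$ shows that $t^{p-1} \xi \in \partial H(t Du)$ a.e.; testing the corresponding subgradient inequality against $Dv$ produces
$$H(Dv) \ge H(t Du) + t^{p-1}(\xi, Dv - tDu) = t^p H(Du) + t^{p-1}(\xi, Dv) - p\, t^p H(Du),$$
after one more use of the Euler identity. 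Rearranging yields exactly $t^{p-1}(\xi, Dv) - (p-1) t^p H(Du) \le H(Dv)$, which is \eqref{picone}.

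For the equality case, suppose $H$ is strictly convex, $\Omega$ is connected, and equality holds in \eqref{picone} almost everywhere; tracing back the argument this forces $H(Dv) = H(tDu) + t^{p-1}(\xi, Dv - tDu)$ a.e. Strict convexity of $H$ upgrades the subgradient bound to the strict inequality $H(y) > H(z) + (\eta, y-z)$ whenever $y \ne z$ and $\eta \in \partial H(z)$ (indeed, midpoint convexity at $(y+z)/2$ combined with the ordinary subgradient bound at $z$ produces a contradiction), so $Dv = t\, Du = (v/u) Du$ a.e. Hence $D(v/u) = (u\, Dv - v\, Du)/u^2 = 0$ a.e., and connectedness of $\Omega$ forces $v/u$ to be a constant $k \ge 0$; the constant is positive as long as $v \not\equiv 0$ (the trivial case $v \equiv 0$ yields equality with $k = 0$). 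The main obstacle I anticipate is the Sobolev-level justification of the chain rule for $v^p/u^{p-1}$ and $v/u$: the local $L^\infty$ bounds and the strict positivity of $u$ allow an approximation/truncation argument, but extra care is needed because $H$ is not assumed differentiable and the pointwise manipulations involve the measurable selection $\xi$ of $\partial H(Du)$.
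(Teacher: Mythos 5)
Your proof is correct and follows the paper's route (subgradient inequality, Leibniz rule for $D(v^p/u^{p-1})$, positive homogeneity), but with a genuine sharpening worth noting. Where the paper invokes only the one-sided bound $p\, H(z)\ge (\xi, z)$ of Proposition \ref{Lcoer}, you correctly observe that for convex, positively $p$-homogeneous $H$ the \emph{full Euler identity} $(\xi, z)=p\, H(z)$ holds for every $\xi\in\partial H(z)$: the function $t\mapsto t^p\, H(z)-H(z)-(t-1)(\xi, z)$ is nonnegative on $]0,\infty[$ and vanishes at $t=1$, so its derivative vanishes there as well (the paper's proof only used $t\ge 1$, hence got only the one-sided bound). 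This observation in particular makes the pointwise identity \eqref{id2} automatic for any measurable selection, independently of criticality. With the identity in hand, your placement of the subgradient inequality at $t\, Du$ (with $t=v/u$) tested against $Dv$, together with the computation that $t^{p-1}\xi\in\partial H(t\, Du)$, is precisely the $t^p$-rescaled version of the paper's step (subgradient at $Du$ tested against $(u/v)Dv$); the rescaling is why you can sidestep the paper's $\varepsilon$-regularization of $v$, since $t\, Du$ makes sense even where $v=0$. Your equality-case argument via $D(v/u)=0$ is cleaner than the paper's passage through $D\log u = D\log v$, and you correctly flag the degenerate corner case $v\equiv 0$ (where the constant is $k=0$), which the statement of the lemma tacitly excludes and which is harmless in the downstream applications where $v$ is a positive critical point.
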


\begin{proof}
By considering $v+\eps$ instead of $v$ and then passing to the limit for $\eps\downarrow 0$, we can suppose that $\inf_\Omega v>0$.
First observe that by the assumptions $v^p/u^{p-1}\in W^{1,1}_{\rm loc}(\Omega)$ and  Liebnitz rule for weak derivatives ensures
\beq
\label{crule}
D\frac{v^p}{u^{p-1}}=(1-p)\, \frac{v^p}{u^p}\, Du + p\, \frac{v^{p-1}}{u^{p-1}}\, Dv.
\eeq
Let $\xi\in \partial H(w)$ for some $w\in \R^N$. The following computation are henceforth made at a point $x\in \Omega$ such that  $\xi=\xi(x)\in \partial H(Du(x))$. In the inequality
\[
H(z)\ge H(Du)+(\xi, z-Du)\qquad \forall z\in \R^N
\]
we choose $z=(u/v)\, Dv$ to get, by $p$-homogeneity,
\[
\left(\frac{u}{v}\right)^p\, H(Dv)=H\left(\frac{u}{v} \, Dv\right)\ge H(Du)+\frac{u}{v}\, (\xi, Dv)-(\xi, Du)
\]
and by \eqref{phom1} at $w=Du(x)$
\[
\left(\frac{u}{v}\right)^p\, H(Dv)\ge \left(\frac{1}{p}-1\right)(\xi, Du)+\frac{u}{v}\, (\xi, Dv).
\]
Multiplying both sides by $(v/u)^p$ and using \eqref{crule} we finally get
\[
H(Dv)\ge \frac{1}{p} \left(\xi, (1-p)\, \frac{v^p}{u^p}\, Du + p\, \frac{v^{p-1}}{u^{p-1}}\, Dv\right)=\frac{1}{p}\left(\xi, D \frac{v^p}{u^{p-1}}\right)
\]
and \eqref{picone} is proved. 

To prove the second statement, suppose equality holds in \eqref{picone} a.\,e.\,in $\Omega$ for some choice of $\xi$ obeying $\xi\in \partial H(Du)$ a.\,e.. Then all the previous inequalities are equalities and in particular
\[
H\left(\frac{u}{v} \, Dv\right)= H(Du)+ \left(\xi, \frac{u}{v}\, Dv- Du\right)
\]
 holds for a.\,e.\,$x\in\Omega$. Let, for any such $x$, $w=Du(x)$ and $z=(u(x)/v(x))\, Dv(x)$ and suppose that $w\ne z$. From $H(z)=H(w)+(\xi, z-w)$ and the convexity of $H$ we infer
\[
H\big(t\, z+(1-t)\, w\big)\le t\, H(z)+(1-t)\, H(w)=H(w)+\big(\xi, t\, (z-w)\big),
\]
while from $\xi\in \partial H(z)$ we deduce
\[
H(w)+\big(\xi, t\, (z-w)\big)=H(w)+\big(\xi, (t\, z+(1-t)\, w) -w\big)\le H\big(t\, z+(1-t)\, w\big).
\]
 Therefore $H$ is linear on the segment from $z$ to $w$, contradicting the strict convexity of $H$. Therefore $z=w$, hence
\[
D\log u=D\log v
\]
a.\,e.\,in $\Omega$, which implies that $u$ and $v$ are proportional by the connectedness of $\Omega$.
\end{proof}

\begin{remark}
By inspecting the proof, it is readily seen that assumption $\inf_\Omega u>0$ can be weakened to 
\[
\inf_{\Omega'} u>0, \qquad \forall \Omega'\Subset \Omega.
\]
\end{remark}

We can now prove our main result, Theorem \ref{Mth}, which we now restate in a form convenient for its proof.
\begin{theorem}\label{thfin}
Let $\Omega\subseteq \R^N$ be open, connected and of finite measure.
For $p>1$ suppose that 
\begin{itemize}
\item[i)] $H:\R^N\to [0, \infty[$ is convex, positively $p$-homogeneous and vanishes only at the origin 
\item[ii)] $f$ fulfills \eqref{hypf0} 
\item[iii)]
 The map $ t\mapsto f(x, t)/t^{p-1}$ is non-increasing on $\R_+$ for a.\,e.\,$x\in \Omega$.
\end{itemize}
Let $u\in W^{1, p}_0(\Omega)\setminus \{0\}$ be a non-negative energy critical point for $J$. The following assertions hold.
\begin{enumerate}
\item
 $u$ minimises $J$ among non-negative functions in  $W^{1,p}_{0}(\Omega)$.  
   \item
 If  $t\mapsto f(x, t)/t^{p-1}$ is strictly decreasing, $u$ is the unique positive energy critical point for $J$.
 \item
If $H$ is strictly convex, any other non-negative energy critical point is a constant multiple of $u$.
 \item
 If  $H$ is strictly convex and $f$ doesn't depend on $x$, either $u$ is the unique positive energy critical point for $J$ or $u$ is a first Dirichlet positive eigenfunction. 

 \end{enumerate}
\end{theorem}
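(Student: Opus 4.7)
The plan proceeds in four steps mirroring the four assertions, all resting on the regularity theory of Section \ref{Sregularity} together with the non-smooth Picone inequality of Lemma \ref{thpicone}. The first task is to promote every non-negative critical point $u$ to a continuous, strictly positive function on $\Omega$. The non-increasing hypothesis on $f(x, \cdot)/\cdot^{p-1}$, combined with \eqref{hypf0}, yields both the one-sided growth bound \eqref{hypf} with $q = p$ and the lower bound \eqref{smph}, so Theorem \ref{thbound}, Corollary \ref{reg} and Corollary \ref{smp} apply to give $u \in C(\Omega) \cap L^\infty(\Omega)$ with $u > 0$ in the connected set $\Omega$. Proposition \ref{propositionEL} then produces a measurable selection $\xi_u \in \partial H(Du)$ satisfying the Euler-Lagrange equation on the whole space $V_u = W^{1,p}_0(\Omega)$, together with the pointwise identity \eqref{id2}: $(\xi_u, Du) = pH(Du)$ a.e. in $\Omega$.

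To establish (1), I would combine two ingredients. The non-increasing hypothesis is equivalent to the concavity of $t \mapsto F(x, t^{1/p})$, which yields the chord inequality
\[
F(x, v) - F(x, u) \le \frac{f(x,u)}{p\, u^{p-1}}\left(v^p - u^p\right) \quad \text{a.e. in }\Omega,
\]
valid for any non-negative $v$. On the other hand, applying Lemma \ref{thpicone} with $u$ replaced by $u + \varepsilon$ and $v$ first assumed bounded, and testing the Euler-Lagrange equation with the admissible function $v^p/(u+\varepsilon)^{p-1} \in W^{1,p}_0(\Omega) \cap L^\infty(\Omega)$, one obtains after letting $\varepsilon \downarrow 0$
\[
\int_\Omega H(Dv)\, dx \ge \frac{1}{p}\int_\Omega f(x,u)\frac{v^p}{u^{p-1}}\, dx.
\]
Subtracting the integrated chord inequality from $\int_\Omega H(Dv) - H(Du)\,dx$, invoking the identity \eqref{id} $p\int H(Du)\,dx = \int f(x,u)u\,dx$, and using the Picone-based estimate produces the telescoping cancellation $J(v) - J(u) \ge 0$; removing the boundedness of $v$ is a routine truncation.

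For assertions (2) and (3), let $v$ be a second non-negative critical point, which by the previous step enjoys the same regularity. Running the integrated Picone inequality both ways (testing the $u$-equation with $v^p/u^{p-1}$ and the $v$-equation with $u^p/v^{p-1}$, each regularised as above) and using \eqref{id} for both functions, one arrives at
\[
\int_\Omega \left(\frac{f(x,u)}{u^{p-1}} - \frac{f(x,v)}{v^{p-1}}\right)\left(u^p - v^p\right)\, dx \ge 0,
\]
while the non-increasing hypothesis forces the integrand to be pointwise non-positive, so it vanishes a.e. Strict monotonicity then forces $u = v$ a.e., proving (2). Under only the non-increasing hypothesis but assuming $H$ strictly convex, the a.e.\ vanishing of the integrand forces the two intermediate non-negative quantities $\int f(x,v) v\,dx - \int f(x,u)v^p/u^{p-1}\,dx$ and $\int f(x,u)u\,dx - \int f(x,v) u^p/v^{p-1}\,dx$ to be individually zero; this propagates back to pointwise a.e.\ equality in the Picone inequality, and the equality clause of Lemma \ref{thpicone} (invoking connectedness of $\Omega$) yields $v = ku$ for some $k > 0$, proving (3).

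Finally, for (4) suppose $v = ku$ is a second non-negative critical point with $k \ne 1$; WLOG $k > 1$. The a.e.\ vanishing from the previous step gives $f(u(x))/u(x)^{p-1} = f(ku(x))/(ku(x))^{p-1}$ a.e., and together with the non-increasing hypothesis this forces $t \mapsto f(t)/t^{p-1}$ to be constant on each interval $[u(x), ku(x)]$, hence on their union. Since for $\Omega$ of finite measure no positive constant lies in $W^{1,p}_0(\Omega)$ (by the Poincaré inequality and the truncation $\min(u, \inf_\Omega u) \in W^{1,p}_0(\Omega)$), one has $\inf_\Omega u = 0$, and by continuity of $u$ the union above is $(0, k\sup_\Omega u]$. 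Hence $f(t) = \lambda t^{p-1}$ on this interval for some $\lambda > 0$, and in particular $F(t) = (\lambda/p) t^p$ there. For any $w \ge 0$ in $W^{1,p}_0(\Omega)$, rescaling by a small $\alpha > 0$ so that $\alpha w \le k\sup_\Omega u$ and applying assertion (1) to $\alpha w$ (on which $F(\alpha w) = (\lambda/p)(\alpha w)^p$) yields $\int H(D(\alpha w))\,dx \ge (\lambda/p) \int (\alpha w)^p\,dx$; the $p$-homogeneity of $H$ then gives $\int H(Dw)/\int w^p \ge \lambda/p = \int H(Du)/\int u^p$ (the last equality by \eqref{id}), exhibiting $u$ as a minimiser of the Rayleigh quotient \eqref{l0} among non-negative functions. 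The main technical hurdle throughout is the limit $\varepsilon \downarrow 0$ in the Picone step: since $u$ need not be bounded away from zero near $\partial\Omega$ in the absence of boundary regularity, the passage requires splitting $f(\cdot, u)$ into positive and negative parts and applying monotone convergence to each, with the boundedness of $u$ (and hence of $f(\cdot, u)$) ensuring that $v^p/(u+\varepsilon)^{p-1}$ remains in $W^{1,p}_0(\Omega) \cap L^\infty(\Omega)$.
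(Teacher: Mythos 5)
Your proposal is correct and follows essentially the same route as the paper: the same regularity pipeline (Theorem~\ref{thbound}, Corollary~\ref{reg}, Corollary~\ref{smp}), the same regularised Picone test function $v^p/(u+\varepsilon)^{p-1}$ with the $\Omega_\pm$ splitting to pass to the limit, the chord inequality coming from hidden concavity for assertion~(1), the symmetric Picone comparison for (2), the equality case of Lemma~\ref{thpicone} for (3), and the constancy of $g=f(\cdot)/(\cdot)^{p-1}$ on the range of $u$ for (4). Two places where you are slightly terser than the paper and would need an extra line in a full write-up: in (3), the "propagation back to pointwise a.e.\ equality" is not automatic from the integrated equality — the paper runs a Fatou argument on the non-negative integrand $p\,H(Dv)-(\xi,D\,v^p/(u+\varepsilon)^{p-1})$ to conclude; and in (4), the rescaling $\alpha w\le k\sup_\Omega u$ only applies to bounded $w$, so one must first treat bounded $w$ and then remove boundedness by monotone convergence, as the paper does.
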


\begin{proof}
Since $u\ge 0$ by assumption, we can assume $f(x, t)\equiv f(x, 0)$ for all $t\le 0$.
Let 
\[
h(t)=\sup_{0\le s\le t}\|f(\cdot, s)\|_\infty.
\]
 By the monotonicity assumption {\em iii)} and \eqref{hypf0}
\beq
\label{t22}
f(x, t)\le
\begin{cases} 
h(1)\, t^{p-1}& \forall t\ge 1\\
h(1)&\forall t\le 1.
\end{cases}
\eeq
so that Theorem \ref{thbound} and  Corollary \ref{reg}  apply and $u\in C^0(\Omega)\cap L^\infty(\Omega)$. Moreover
\beq
\label{t20}
f(x, t)\ge\frac{ f(x,\|u\|_\infty)}{\|u\|_\infty^{p-1}}\, t^{p-1}\ge -\frac{h(\|u\|_\infty)}{\|u\|_\infty^{p-1}}\, t^{p-1} \qquad \forall t\in [0, \|u\|_\infty]
\eeq
so that the connectedness of $\Omega$ and Corollary \ref{smp} ensure that $u>0$ in $\Omega$.

Since $u$ is bounded, it holds $V_u=W^{1,p}_0(\Omega)$, hence its Euler-Lagrange equation \eqref{EL} holds true for any $\varphi\in W^{1,p}_0(\Omega)$. Let $\xi\in L^{p'}(\Omega; \R^N)$ be such that 
\[
\xi(x)\in \partial H(Du(x))\quad \text{a.\,e.},\qquad \int_{\Omega}\left(\xi, D\varphi\right) -f(x,u)\, \varphi\, dx=0\quad \forall \varphi\in W^{1,p}_0(\Omega)
\]
and let $v\in W^{1,p}_0(\Omega)\cap L^\infty(\Omega)$ be non-negative. It is readily checked that for any $\eps>0$ the function $\varphi=v^p/(u+\eps)^{p-1}$ is an admissible test function in $W^{1,p}_0(\Omega)$, hence
\[
\int_\Omega \left(\xi, D \frac{v^p}{(u+\eps)^{p-1}}\right)\, dx=\int_\Omega f(x, u)\, \frac{v^p}{(u+\eps)^{p-1}}\, dx
\]
and by Picone inequality \eqref{picone}  (notice that $\xi\in \partial H(Du)=\partial H(D(u+\eps))$), we have
\beq
\label{t21}
p\, \int_\Omega H(Dv)\, dx\ge \int_\Omega \left(\xi, D \frac{v^p}{(u+\eps)^{p-1}}\right)\, dx=\int_\Omega f(x, u)\, \frac{v^p}{(u+\eps)^{p-1}}\, dx.
\eeq
We claim that 
\beq
\label{t16}
\lim_{\eps\downarrow 0} \int_\Omega \frac{f(x, u)}{(u+\eps)^{p-1}}\, v^p\, dx=\int_\Omega \frac{f(x, u)}{u^{p-1}}\, v^p\, dx.
\eeq
Indeed, let
\[
\Omega_+=\big\{x\in \Omega: f(x, u(x))\ge 0\big\}, \qquad \Omega_-=\big\{x\in \Omega: f(x, u(x))<0\big\}.
\]
By Beppo-Levi monotone convergence theorem, \eqref{t16} holds true with $\Omega_+$ instead of $\Omega$, while  on $\Omega_-$ it holds, by \eqref{t20},
\[
0\ge \frac{f(x, u)}{(u+\eps)^{p-1}}\, v^p\ge -\frac{h(\|u\|_\infty)}{\|u\|_\infty^{p-1}}\, \frac{u^{p-1}}{(u+\eps)^{p-1}}\, v^p\ge  -\frac{h(\|u\|_\infty)}{\|u\|_\infty^{p-1}}\, \|v\|_\infty^p
\]
so that dominated convergence yields \eqref{t16} on $\Omega_-$ as well.
Therefore \eqref{t16} is proved, and passing to the limit as $\eps\downarrow 0$ in \eqref{t21} yields
\beq
\label{t7}
p\, \int_\Omega H(Dv)\, dx\ge \int_\Omega \frac{f(x, u)}{u^{p-1}}\, v^p\, dx.
 \eeq
 Another monotone convergence argument (separately on $\Omega_+$ and $\Omega_-$) allows to remove the assumption $v\in L^\infty(\Omega)$, so that \eqref{t7} holds for all $v\in W^{1,p}_0(\Omega)$, $v\ge 0$. As a byproduct of this argument, let us notice that the integrand on the right of \eqref{t7} is always in $L^1(\Omega)$, for any $v\in W^{1, p}_0(\Omega)$, $v\ge 0$.

 Let us prove assertion {\em (1)}. To this end, recall that $F(\cdot, v)_+\in L^1(\Omega)$ for all non-negative  $v\in W^{1,p}_0(\Omega)$, thanks to the one-sided growth condition \eqref{t22}. Therefore it suffices to compare $J(u)$ with $J(v)$ for non-negative  $v\in W^{1,p}_0(\Omega)$ such that $F(\cdot, v)\in L^1(\Omega)$.
We rewrite \eqref{t7}  as
 \[
 J(v)\ge \int_\Omega \frac{f(x, u)}{u^{p-1}}\, \frac{v^p}{p}-F(x, v)\, dx
 \]
 and  recall that,  by \eqref{id},
 \[
 J(u)=\int_\Omega\frac{1}{p}\, f(x, u)\, u-F(x, u)\, dx,
 \]
 so we have
 \[
 J(v)-J(u)\ge \int_\Omega  \frac{f(x, u)}{u^{p-1}}\, \left(\frac{v^p}{p}-\frac{u^p}{p}\right)+ F(x, u)-F(x, v)\, dx.
 \]
 We claim that the integrand is non-negative. Indeed, for a.\,e.\,$x\in \Omega$ it holds
 \[
 F(x, t)-F(x, s)=\int_s^t \frac{f(x, \sigma)}{\sigma^{p-1}}\, \sigma^{p-1}\, d\sigma 
 \]
 so, considering separately the two cases $t\ge s\ge 0$ and $0\le t\le s$ and using the monotonicity of $\sigma\mapsto f(x, \sigma)/\sigma^{p-1}$, we get
 \[
 F(x, t)-F(x, s)\ge \frac{f(t)}{t^{p-1}}\, \left(\frac{t^p}{p}-\frac{s^p}{p}\right)
 \]
 for all $t, s\ge 0$. Setting $t=u(x)$ and $s=v(x)$ proves the claim, therefore $J(v)\ge J(u)$ for all $v\ge 0$, $v\in W^{1,p}_0(\Omega)$, concluding the proof of the first statement.

To prove  statement {\em (2)}, suppose now that $v$ is another nontrivial, non-negative critical point  for $J$, so that \eqref{t7} holds true with $u$ and $v$ exchanged. Recalling \eqref{id} we get
\[
\int_\Omega f(x, v)\, v\, dx\ge \int_\Omega \frac{f(x, u)}{u^{p-1}}\, v^p\, dx, \qquad \int_\Omega f(x, u)\, u\, dx\ge  \int_\Omega \frac{f(x, v)}{v^{p-1}}\, u^p\, dx
\]
which, summed up, give
\[
\int_\Omega \left(\frac{f(x, u)}{u^{p-1}}-\frac{f(x, v)}{v^{p-1}}\right)\, (u^p-v^p)\, dx\ge 0.
\]
By assumption {\em ii)}, the latter forces 
\beq
\label{t24}
\frac{f(x, u)}{u^{p-1}}=\frac{f(x, v)}{v^{p-1}}
\eeq
a.\,e.\,in $\Omega$, which implies that $u=v$ if $t\mapsto f(x, t)/t^{p-1}$ is strictly decreasing.

To prove assertion {\em (3)}, recall that by Picone inequality
\[
p\, H(Dv)-\left(\xi, D \frac{v^p}{(u+\eps)^{p-1}}\right)\ge 0,
\]
for all $\eps\ge 0$. Hence, chaining  Fatou lemma, the weak form of the equation for $u$, \eqref{t16}, \eqref{t24} and \eqref{id}, we get
  \[
  \begin{split}
\int_\Omega p\, H(Dv)-\left(\xi, D \frac{v^p}{u^{p-1}}\right)\, dx&\le \lim_{\eps\downarrow 0}\int_\Omega p\, H(Dv)-\left(\xi, D \frac{v^p}{(u+\eps)^{p-1}}\right)\, dx\\
&= \lim_{\eps\downarrow 0}\int_\Omega p\, H(Dv)-f(x, u)\, \frac{v^p}{(u+\eps)^{p-1}}\, dx\\
&= \int_\Omega p\, H(Dv)-\frac{f(x, u)}{u^{p-1}}\, v^p\, dx\\
&= \int_\Omega p\, H(Dv)-f(x, v)\, v\, dx=0
\end{split}
\]
so that equality must occur a.\,e.\,in the Picone inequality. If $H$ is strictly convex, Lemma \ref{thpicone} ensures that 
\beq\label{eq:prop}
u=k\, v \,,\qquad\mbox{for some $k>0$,}
\eeq
giving assertion {\em (3)}.

Finally, statement {\em (4)} amounts to prove  that, in the case $f(x, t)=f(t)$, the validity of \eqref{eq:prop} for some  $k\ne 1$ forces $u$ to be a first Dirichlet positive eigenfunction.
Let $g(t):=f(t)/t^{p-1}$ and assume that $k>1$ in \eqref{eq:prop}. For any fixed $\delta\in \ ]0, \|u\|_\infty[$, by continuity $u(\Omega)$ is connected and contains $]0, \|u\|_\infty-\delta]$. Hence for any $n\in \N$ there exists $x_n\in \Omega$ such that 
\[
u(x_{n})=\frac{\| u \|_\infty-\delta}{k^n}
\]
and thus \eqref{t24} and \eqref{eq:prop} give
\[
\begin{split}
g(\|u\|_\infty-\delta)&=g(u(x_0))=g(v(x_0))=g(u(x_0)/k)=g((\|u\|_\infty-\delta)/k)=g(u(x_1))\\
&=g(v(x_1))=g(u(x_1)/k)=g((\|u\|_\infty-\delta)/k^2)=\dots=g((\|u\|_\infty-\delta)/k^n)
\end{split}
\]
for any $n\ge 0$. Therefore $g$, being continuous and non-increasing, is constantly equal to some $\lambda\ge 0$ on $]0, \|u\|_\infty-\delta]$. By letting $\delta\downarrow 0$, the same conclusion holds on $]0, \|u\|_\infty]$. Similarly, if $0<k<1$, we infer that $g\equiv \lambda\ge 0$ on 
\[
]0, \|v\|_\infty]=\ ]0, \|u\|_\infty/k]\supseteq \ ]0, \|u\|_\infty].
\] 
Notice that then $f(t)=\lambda\, t^{p-1}$ for $t\in \left[0, \|u\|_\infty\right]$, hence 
\beq
\label{F}
F(t)=\frac{\lambda}{p}\, t^p\qquad \text{ for all  $t\in [0, \|u\|_\infty ]$}.
\eeq
 By \eqref{id} it holds 
 \[
 J(u)=\int_\Omega H(Du)-\frac{\lambda}{p} \, u^p\, dx=0
 \]
  and by the first assertion of the Theorem
 \[
  J(v)\ge J(u)\qquad \forall v\in W^{1,p}_0(\Omega), \ v\ge 0.
 \]
 If $0\le v\le \|u\|_\infty$, from the last two displays and \eqref{F} we infer
 \[
 \frac{\lambda}{p} \int_\Omega v^p\, dx\le \int_\Omega H(Dv)\, dx.
 \]
By positive $p$-homogeneity, the latter  holds for any bounded $v\ge 0$, $v\ne 0$, (simply writing it for $w=v\, \|u\|_\infty/\|v\|_\infty$, which fulfills $0\le w\le \|u\|_\infty$) and thus for any $v\ge 0$ in $W^{1,p}_0(\Omega)$ by a monotone convergence argument. It follows that $\lambda/p\le \lambda_{1, H}^+(\Omega)$ and $J(u)=0$ implies
 \[
\lambda_{1, H}^+(\Omega)\, \int_\Omega u^p\, dx\le \int_\Omega H(Du)\, dx=\frac{\lambda}{p}\, \int_\Omega u^p\, dx \le \lambda_{1, H}^+(\Omega)\, \int_\Omega u^p\, dx
 \]
 i.\,e., $u$ is a first Dirichlet positive eigenfunction, as claimed.
 
 \end{proof}

\bigskip

\bigskip

{\bf Acknowledgements}\\
The author thanks an anonymous referee for his careful reading of the manuscript and valuable suggestions.
Prof. M. Degiovanni is warmly thanked for outlining a mistake in a first version of the manuscript and for drawing our attention to paper \cite{DGM2}. Prof. J.\,I.\,D\'iaz is also thanked for kindly pointing out reference \cite{D}.

\medskip

{\bf Funding informations}\\
The author is member of {\em Gruppo Nazionale per l'Analisi Ma\-te\-ma\-ti\-ca, la Probabilit\`a e le loro Applicazioni} (GNAMPA),
 is partially supported by project PIACERI - Linea 2 and 3 of the University of Catania and by GNAMPA's project "Equazioni alle derivate parziali di tipo ellittico o parabolico con termini singolari". 

\medskip

{\bf Conflict of interest}\\
The author states no conflict of interest

\end{document}